\newtheorem{theorem}{Theorem}[section]
\newtheorem{lemma}[theorem]{Lemma}
\newtheorem{corollary}[theorem]{Corollary}
\newtheorem{remark}[theorem]{Remark}
\renewcommand{\phi}{\varphi}
\renewcommand{\theta}{\vartheta}
\DeclareMathOperator{\dist}{dist}
\newcommand{\alg}{\mathscr{R}}
\newcommand{\abs}[1]{\lvert#1\rvert}
\newcommand{\nen}{n\in\mathbb{N}}
\newcommand{\ran}{\operatorname{ran}}
\newcommand{\D}{\mathscr{D}}
\newcommand{\M}{\mathscr{M}}
\newcommand{\Hh}{\mathscr{H}}
\newcommand{\Zz}{\mathfrak{Z}}
\newcommand{\Mm}{\mathscr{M}}
\DeclarePairedDelimiterX\sip[2]{(}{)}{#1\,\delimsize\vert\,#2}
\DeclarePairedDelimiterX\siptilde[2]{(}{)_{\!_{\widetilde{A}}}}{#1\,\delimsize\vert\,#2}
\DeclarePairedDelimiterX\sipn[2]{(}{)_{\nu}}{#1\,\delimsize\vert\,#2}
\DeclarePairedDelimiterX\sipm[2]{(}{)_{\mu}}{#1\,\delimsize\vert\,#2}
\DeclarePairedDelimiterX\set[2]{\{}{\}}{#1\,\delimsize\vert\,#2}
\DeclarePairedDelimiterX\dual[2]{\langle}{\rangle}{#1,#2}
\DeclarePairedDelimiterX\sipa[2]{(}{)_{\!_A}}{#1\,\delimsize\vert\,#2}
\DeclarePairedDelimiterX\sipb[2]{(}{)_{\!_B}}{#1\,\delimsize\vert\,#2}
\newcommand{\bph}{\mathbf{B}_+(\mathscr{H})}
\newcommand{\Aa}{\mathscr{A}}
\newcommand{\Ee}{\mathcal{E}}
\newcommand{\dwt}{\mathbf{D}_{\wf}\tf}
\newcommand{\ffi}{\varphi}
\newcommand{\uf}{\mathfrak{u}}
\newcommand{\tf} {\mathfrak{t}}
\newcommand{\wf} {\mathfrak{w}}
\newcommand{\ssf} {\mathfrak{s}}
\newcommand{\Xx}{\mathfrak{X}}
\newcommand{\Yy}{\mathfrak{Y}}
\newcommand{\Ht}{\mathscr{H}_{\tf}}
\newcommand{\Rr}{\mathcal{R}}
\newcommand{\fpx}{\mathcal{F}_+(\Xx)}
\renewcommand*{\phi}{\varphi}
\renewcommand*{\theta}{\vartheta}
\newcommand{\tkerw}{\tf_{{}_{\ker\wf}}}
\newcommand{\ty}{\mathfrak{t}_{\mathfrak{Y}}}
\DeclareMathOperator{\tform}{\mathfrak{t}}
\DeclareMathOperator{\mform}{\mathfrak{t}_{\mu}}
\DeclareMathOperator{\nform}{\mathfrak{t}_{\nu}}
\begin{document}
\title{A Short-type Decomposition Of Forms}

\author[Z. Sebesty\'en]{Zolt\'an Sebesty\'en}
\author[Zs. Tarcsay]{Zsigmond Tarcsay}
\author[T. Titkos]{Tam\'as Titkos}
\address{Z. Sebesty\'en, Zs. Tarcsay, T. Titkos - Institute of Mathematics, E\"otv\"os L. University, P\'azm\'any P\'eter s\'et\'any 1/c., Budapest H-1117, Hungary; }
\email{sebesty@cs.elte.hu,~tarcsay@cs.elte.hu,~titkos@cs.elte.hu}

\keywords{Lebesgue decomposition, nonnegative forms, positive operators, absolute continuity, singularity, generalized short}
\subjclass[2010]{Primary 47A07, Secondary 47B65, 28A12, 46L51}

\begin{abstract} The main purpose of this paper is to present a decomposition theorem for nonnegative sesquilinear forms. The key notion is the short of a form to a linear subspace. This is a generalization of the well-known operator short defined by M. G. Krein. A decomposition of a form into a shorted part and a singular part (with respect to an other form) will be called short-type decomposition. As applications, we present some analogous results for bounded positive operators acting on a Hilbert space; for additive set functions on a ring of sets; and for representable positive functionals on a $\ast$-algebra.
\end{abstract}

\maketitle

\section*{Introduction}

To begin with we give a brief survey of the required definitions and results from \cite{lebdec}, which is our constant reference where the omitted details of this section can be found.

Let $\Xx$ be a complex linear space and let $\tf$ be nonnegative sesquilinear form on it. That is, $\tf$ is a mapping from the Cartesian product
$\Xx\times\Xx$ to $\mathbb{C}$, which is linear in the first argument, antilinear in the second argument, and the corresponding
quadratic form $\tf[\cdot]:\Xx\to\mathbb{R}$
\begin{align*}
\forall x\in\Xx:\quad\tf[x]:=\tf(x,x)
\end{align*}
is nonnegative. In this paper all sesquilinear forms are assumed to be nonnegative, hence we write shortly \emph{form}. The quadratic form of a form fulfills the parallelogram law
\begin{align*}
\forall x,y\in\Xx:\quad\tf[x+y]+\tf[x-y]=2(\tf[x]+\tf[y]).
\end{align*}
According to the Jordan-von Neumann theorem \cite[Satz 1.3]{weidmann}, a form is uniquely determined via its quadratic form, namely
\begin{align*}
\forall x,y\in\Xx:\quad\tf(x,y)=\frac{1}{4}\sum\limits_{k=0}^3i^k\tf[x+i^ky].
\end{align*}
The set $\fpx$ of forms is partially ordered with respect to the ordering
\begin{align*}
\tf\leq\wf~~~\Longleftrightarrow~~~\forall x\in\Xx:\quad\tf[x]\leq\wf[x].
\end{align*}
If there exists a constant $c$ such that $\tf\leq c\cdot\wf$ then we say that $\tf$ is dominated by $\wf$ ($\tf\leq_{\mathrm{d}}\wf$, in symbols). Since the square root of the quadratic form defines a seminorm on $\Xx$, then the kernel of $\tf$
\begin{align*}
\ker\tf:=\big\{x\in\Xx~\big|~\tf[x]=0\big\}
\end{align*}
is a linear subspace of $\Xx$. The Hilbert space $\mathscr{H}_{\tf}$ denotes the completion of the inner product space $\Xx/_{\ker\tf}$ equipped with the natural inner product
\begin{align*}
\forall x,y\in\Xx:\quad(x+\ker\tf~|~y+\ker\tf)_\tf:=\tf(x,y).
\end{align*}
We say that the form $\tf$ is \emph{strongly $\wf$-absolutely continuous} ($\tf\ll_{\mathrm{s}}\wf$), if
\begin{align*}
\forall(x_n)_{n\in\mathbb{N}}\in\Xx^{\mathbb{N}}:\quad\big((\tf[x_n-x_m]\to0)~\wedge~(\wf[x_n]\to0)\big)~\Longrightarrow~\tf[x_n]\to0.
\end{align*}
Remark that this notion is called \emph{closability} in \cite{lebdec}; cf. also \cite{simon}. 
The \emph{singularity} of $\tf$ and $\wf$ (denoted by $\tf\perp\wf$) means that
\begin{align*}
\forall\ssf\in\fpx:\quad\big((\ssf\leq\tf)~\wedge~(\ssf\leq\wf)\big)~\Longrightarrow~\ssf=0.
\end{align*}
The parallel sum $\tf:\wf$ of $\tf$ and $\wf$, and the strongly absolutely continuous (or closable) part $\dwt$ of $\tf$ with respect to $\wf$ are defined by
\begin{align*}
\forall x\in\Xx:\quad(\tf:\wf)[x]:=\inf_{y\in\Xx}\big\{\tf[x-y]+\wf[y]\big\}
    \end{align*}and
\begin{align*}
\dwt:=\sup\limits_{n\in\mathbb{N}}(\tf:n\wf).
\end{align*}
The following decomposition theorem of S. Hassi, Z. Sebesty\'en, and H. de Snoo generalizes the operator decomposition of T. Ando \cite{andolebdec,tarcsayopdec}, the Lebesgue decomposition of finitely additive set functions \cite{darst} (see also \cite{stt,tarcsaylebdec,titkos}), and the canonical decomposition of densely defined forms \cite{simon}.

\begin{theorem}\label{LDT}
Let $\tf$ and $\wf$ be forms on the complex linear space $\Xx$. Then the decomposition
\begin{align*}
\tf=\dwt+(\tf-\dwt)
\end{align*}
is a $(\ll_{\mathrm{s}},\perp)$-type decomposition of the form $\tf$ with respect to $\wf$. That is, $\dwt$ is strongly $\wf$-absolutely continuous, $(\tf-\dwt)$ is $\wf$-singular. Furthermore, this decomposition is extremal in the following sense:
\begin{align*}
\forall\ssf\in\fpx:\quad\big((\ssf\leq\tf)~\wedge~(\ssf\ll_{\mathrm{s}}\wf)\big)~\Longrightarrow~\ssf\leq\dwt.
\end{align*}
%
The decomposition is unique precisely when $\dwt$ is dominated by $\wf$.
\end{theorem}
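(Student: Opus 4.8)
The plan is to verify the four assertions in the order that keeps the arguments self-contained: first that $\dwt$ is a form with $\dwt\le\tf$, then $\dwt\ll_{\mathrm{s}}\wf$, then the extremal property, and finally to deduce both the $\wf$-singularity of $\tf-\dwt$ and the uniqueness dichotomy from extremality. For the structural claim, each parallel sum $\tf:\wf$ is again a form with $\tf:\wf\le\tf$ and $\tf:\wf\le\wf$: writing $\pi_\tf\colon\Xx\to\Ht$ and $\pi_\wf\colon\Xx\to\mathscr{H}_{\wf}$ for the canonical maps, one has $\tf[x-y]+\wf[y]=\bigl\|(\pi_\tf x,0)-(\pi_\tf y,-\pi_\wf y)\bigr\|^2$ in $\Ht\oplus\mathscr{H}_{\wf}$, so $(\tf:\wf)[x]=\dist\bigl((\pi_\tf x,0),\M\bigr)^2$ where $\M$ is the closure of the subspace $\{(\pi_\tf y,-\pi_\wf y):y\in\Xx\}$; since $x\mapsto(\pi_\tf x,0)$ is linear and $\dist(v,\M)^2=\|(I-P_{\M})v\|^2$ for the orthogonal projection $P_{\M}$ onto $\M$, the map $(\tf:\wf)[\cdot]$ is a quadratic form. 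Monotonicity of the infimum then gives $\tf:n\wf\le\tf:(n+1)\wf\le\tf$, so the pointwise supremum $\dwt=\sup_n(\tf:n\wf)$ is a form with $\dwt\le\tf$ (the parallelogram law passes to the increasing limit), and $\tf-\dwt$ is a form as well.

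For $\dwt\ll_{\mathrm{s}}\wf$, take $(x_n)$ with $\dwt[x_n-x_m]\to0$ and $\wf[x_n]\to0$, fix $\varepsilon>0$, and choose $N$ with $\dwt[x_n-x_m]<\varepsilon$ for $n,m\ge N$. Using the triangle inequality for the seminorm $\sqrt{(\tf:k\wf)[\cdot]}$ together with $\tf:k\wf\le\dwt$ and $(\tf:k\wf)[x_m]\le k\wf[x_m]$ (take $y=x_m$ in the infimum), one gets $\sqrt{(\tf:k\wf)[x_n]}\le\sqrt{\varepsilon}+\sqrt{k\wf[x_m]}$ for every $k$ and all $n,m\ge N$. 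Letting $m\to\infty$ with $n$ and $k$ fixed gives $(\tf:k\wf)[x_n]\le\varepsilon$, and taking the supremum over $k$ gives $\dwt[x_n]\le\varepsilon$ for all $n\ge N$; hence $\dwt[x_n]\to0$.

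The heart of the matter is the extremal property, and this is the step I expect to be the main obstacle. Because the parallel sum is monotone in its first argument, $\ssf\le\tf$ already forces $\mathbf{D}_\wf\ssf:=\sup_n(\ssf:n\wf)\le\sup_n(\tf:n\wf)=\dwt$, so it suffices to show that $\ssf\ll_{\mathrm{s}}\wf$ implies $\ssf=\mathbf{D}_\wf\ssf$, and I would do this via representing operators. On $\mathscr{H}:=\mathscr{H}_{\ssf+\wf}$, with canonical map $\pi$ of dense range, there is a unique $A$ with $0\le A\le I$, $\ssf(x,y)=(A\pi x\mid\pi y)$ and $\wf(x,y)=((I-A)\pi x\mid\pi y)$ (apply the Riesz lemma using $\ssf,\wf\le\ssf+\wf$). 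Minimising the associated quadratic over the dense range of $\pi$ yields $(\ssf:n\wf)[x]=(A\pi x\mid\pi x)-\bigl(A(A+nB)^{-1}A\pi x\mid\pi x\bigr)$ with $B=I-A$; since $A+nB=nI-(n-1)A\ge I$ and $(nI-(n-1)A)^{-1}\to E_A(\{1\})=P_{\ker B}$ strongly by the bounded functional calculus, one obtains $\mathbf{D}_\wf\ssf[x]=\ssf[x]-\|P_{\ker B}\pi x\|^2$. If $\ker B\ne\{0\}$, choosing $y_k\in\Xx$ with $\pi y_k\to\zeta$ for some nonzero $\zeta\in\ker B$ gives a sequence that is $\ssf$-Cauchy with $\wf[y_k]=((I-A)\pi y_k\mid\pi y_k)\to0$ but $\ssf[y_k]\to\|\zeta\|^2>0$, contradicting $\ssf\ll_{\mathrm{s}}\wf$; hence $P_{\ker B}\pi=0$ and $\ssf=\mathbf{D}_\wf\ssf$. (The same representation in fact reduces the whole theorem to Ando's operator Lebesgue decomposition.)

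Granting extremality, the rest is formal. For the singularity of $\tf-\dwt$: if a form $\rform$ satisfies $\rform\le\tf-\dwt$ and $\rform\le\wf$, then $\dwt+\rform\le\tf$, while $\rform\ll_{\mathrm{s}}\wf$ (any form dominated by $\wf$ is strongly $\wf$-absolutely continuous), $\dwt\ll_{\mathrm{s}}\wf$ by the step above, and a sum of strongly $\wf$-absolutely continuous forms is one; extremality then gives $\dwt+\rform\le\dwt$, i.e. $\rform=0$, so $\tf-\dwt\perp\wf$. For uniqueness: if $\tf=\aaf+\ssf$ with $\aaf\ll_{\mathrm{s}}\wf$ and $\ssf\perp\wf$, then $\aaf\le\tf$ and $\aaf\ll_{\mathrm{s}}\wf$ give $\aaf\le\dwt$, so $\rform:=\dwt-\aaf=\ssf-(\tf-\dwt)$ is a form with $\rform\le\ssf$ (hence $\rform\perp\wf$) and $\rform\le\dwt$; if $\dwt\le_{\mathrm{d}}\wf$ then $\rform\le_{\mathrm{d}}\wf$, so $\rform\ll_{\mathrm{s}}\wf$, and a form that is simultaneously $\wf$-singular and strongly $\wf$-absolutely continuous must vanish ($\wf$-singularity gives $\rform:n\wf=0$ for all $n$, hence $\mathbf{D}_\wf\rform=0$, while $\rform\ll_{\mathrm{s}}\wf$ gives $\rform=\mathbf{D}_\wf\rform$), so $\aaf=\dwt$. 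Conversely, when $\dwt$ is not dominated by $\wf$ I would produce a second decomposition by splitting off from $\dwt$ a nonzero $\wf$-singular summand whose complement stays strongly $\wf$-absolutely continuous: the map $x+\ker\wf\mapsto x+\ker\dwt$ is densely defined on $\mathscr{H}_{\wf}$ (note $\ker\wf\subseteq\ker\dwt$), closable precisely because $\dwt\ll_{\mathrm{s}}\wf$, and with unbounded closure precisely because $\dwt\not\le_{\mathrm{d}}\wf$; a spectral cut-off of that closed operator yields the required nontrivial splitting of $\dwt$, and hence of $\tf$ (using that a sum of $\wf$-singular forms is $\wf$-singular). This construction and the operator argument for extremality are where the real analysis lies; everything else is bookkeeping with parallel sums and with the two definitions of absolute continuity and singularity.
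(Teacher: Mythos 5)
Most of what you write is correct and is essentially the method of the sources the paper cites for this theorem (the paper itself gives no proof of Theorem \ref{LDT}; it refers to Hassi--Sebesty\'en--de Snoo and to \cite{stt}): the realisation of $\ssf$ and $\wf$ as $A$ and $I-A$ on $\mathscr{H}_{\ssf+\wf}$, the limit formula $\mathbf{D}_{\wf}\ssf[x]=\ssf[x]-\|P_{\ker(I-A)}\pi x\|^{2}$, the proof that $\dwt\ll_{\mathrm s}\wf$, the extremality, the singularity of $\tf-\dwt$, and uniqueness under domination are all sound. The genuine gap is in the converse uniqueness direction, and it is twofold. First, a spectral cut-off of $\bar T$ (equivalently of $|\bar T|$) does \emph{not} produce a nonzero $\wf$-singular summand of $\dwt$: for any bounded Borel $g$ the form $x\mapsto\|g(|\bar T|)\pi_{\wf}x\|^{2}$ is again strongly $\wf$-absolutely continuous because $g(|\bar T|)$ is closed, and a form that is both strongly $\wf$-absolutely continuous and $\wf$-singular vanishes (by your own extremality argument). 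Concretely, if $|\bar T|$ is unbounded with spectrum in $[1,\infty)$, the upper spectral piece $\uf_{2}$ dominates the nonzero form $x\mapsto\|E([c,\infty))\pi_{\wf}x\|^{2}\leq\wf[x]$, so $\uf_{2}\not\perp\wf$. The singular piece that does exist when $\dwt\not\leq_{\mathrm d}\wf$ must be manufactured differently, e.g.\ as the rank-one form $\ssf[y]=|f(y)|^{2}$ with $f(y)=\mathrm{LIM}_{n}\,\dwt(y,x_{n})$ for a sequence with $\dwt[x_{n}]=1$ and $\wf[x_{n}]\to0$, as in Ando's operator proof.

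Second, the parenthetical claim that a sum of $\wf$-singular forms is $\wf$-singular is false, so even with a correct nonzero $\wf$-singular $\ssf\leq\dwt$ in hand, the singularity of $(\tf-\dwt)+\ssf$ does not come for free. Counterexample on $\Xx=\dupC^{2}$: let $\aaf[x]=|x_{1}|^{2}$, $\bbf[x]=|x_{2}|^{2}$, $\wf[x]=\tfrac12|x_{1}+x_{2}|^{2}$. Any form below both $\aaf$ and $\wf$ has the shape $c|x_{1}|^{2}$, and evaluating at $(1,-1)$ forces $c=0$, so $\aaf\perp\wf$ and likewise $\bbf\perp\wf$; yet $\wf\leq\aaf+\bbf$, so $\wf$ itself witnesses $(\aaf+\bbf)\not\perp\wf$. (Note that your other uses of singularity are fine: a form \emph{dominated by} a $\wf$-singular form is $\wf$-singular; it is only additivity that fails.) The published proofs verify that the perturbed singular part remains $\wf$-singular by an argument tied to the specific construction of $\ssf$; this is precisely the step that cannot be dismissed as bookkeeping, and as written your proof of the ``only if'' half of the uniqueness statement does not go through.
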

For the proof see \cite[Theorem 2.11, Theorem 3.8, Theorem 4.6]{lebdec} or  \cite[Theorem 2.3]{stt}.\\

It is a natural idea to consider the following notion of absolute continuity: we say that $\tf$ is \emph{$\wf$-absolutely continuous} ($\tf\ll\wf$) if $\ker\wf\subseteq\ker\tf$, that is to say,
\begin{align*}
\forall x\in\Xx:\quad\wf[x]=0~\Longrightarrow~\tf[x]=0
\end{align*}
in analogy with the well-known measure case. 

The setup of this paper is the following. Our main purpose is to present an $(\ll,\perp)$-type  decomposition theorem for forms which we shall call a \emph{short-type} decomposition. More precisely, for every pair of forms $\tf$ and $\wf$ we shall show that $\tf$ splits into absolutely continuous and singular parts with respect to $\wf$, where the absolutely continuous part is extremal in a certain sense. This will be done in Section \ref{main theorem}. The key notion is the \emph{short of a form}, which is motivated by \cite[Theorem 6]{anderson} of W. N. Anderson and G. E. Trapp.

In Section \ref{operators} we shall see that this is a generalization of the well-known operator short defined by M. G. Krein \cite{krein}. Moreover, we present a factor decomposition for the shorted operator.
As an application, we gain also a short-type decomposition on the set of bounded positive operators (analogous results for matrices can be found in \cite{andersonshort,rosenberg}). That is, for every $A,B\in\bph$ there exist $S,T\in\bph$ such that
\begin{align*}
A=S+T,
\end{align*}where
\begin{align*}
\ran S\subseteq\overline{\ran B}\hspace{0.5cm} \mbox{and}\hspace{0.5cm}\ran T^{1/2}\cap\mathrm{ran}B^{1/2}=\{0\}.
\end{align*}
Furthermore, we prove the following characterization: the range of the bounded positive operator $B$ is closed if and only if for every $A\in\bph$ the short-type decomposition above is unique. In this case, the shorted part of $A$ is closable with respect to $B$.

Another important application can be found in Section \ref{set functions}. Using our main result, we will prove a decomposition theorem for additive set functions. In the $\sigma$-additive case this decomposition coincides with the well-known Lebesgue decomposition of measures, but in the finitely additive case it differs from the Lebesgue-Darst decomposition \cite{darst}. This fact will demonstrate that the Lebesgue-type decomposition, and the short-type decomposition are different in general, and hence, absolute continuity does not implies strong absolute continuity (see also \cite[Example 2]{gudder}).

Finally, in Section \ref{functionals}, we apply our result to present a short-type decomposition for representable positive functionals of a ${}^\ast$-algebra. We emphasize that we do not make any assumptions for the algebra, neither the commutativity, nor the existence of unit element.

\section{A short-type decomposition theorem for forms}\label{main theorem}
Let $\tf$ and $\wf$ be forms on the complex linear space $\Xx$. The purpose of this section is to show that $\tf$ has a decomposition into a $\wf$-absolutely continuous and a $\wf$-singular part. This type decomposition will be called \emph{short-type decomposition}, or $(\ll,\perp)$-type decomposition. In our further considerations an essential role will be played by the concept of the short of a form, which is introduced in the following lemma.
\begin{lemma} Let $\Yy\subseteq\Xx$ be a linear subspace, and let $\tf\in\fpx$. Then the following formula defines a form on $\Xx$
\begin{align*}
\forall x\in\Xx:\quad\tf_{{}_{\Yy}}[x]:=\inf\limits_{y\in\Yy}\tf[x-y].
\end{align*}
Furthermore, $\tf_{{}_{\Yy}}$ is the maximum  of the set
\begin{align*}
\big\{\ssf\in\fpx~\big|~(\ssf\leq\tf)~\wedge~(\Yy\subseteq \ker\ssf) \big\}.
\end{align*}
\end{lemma}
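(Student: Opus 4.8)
The plan is to verify three things: (i) that $\tf_{{}_{\Yy}}[\cdot]$ is the quadratic form of a genuine (nonnegative sesquilinear) form; (ii) that $\tf_{{}_{\Yy}}$ lies in the set to be maximized, i.e. $\tf_{{}_{\Yy}}\leq\tf$ and $\Yy\subseteq\ker\tf_{{}_{\Yy}}$; and (iii) that it dominates every other member of that set. The key device throughout is to recognize $\tf_{{}_{\Yy}}$ as a distance in the Hilbert space $\Ht$: writing $Q\colon\Xx\to\Xx/_{\ker\tf}\hookrightarrow\Ht$ for the canonical map, one has $\tf[x-y]=\|Qx-Qy\|_{\tf}^2$, so that
\begin{align*}
\tf_{{}_{\Yy}}[x]=\inf_{y\in\Yy}\|Qx-Qy\|_{\tf}^2=\dist\bigl(Qx,\overline{Q\Yy}\bigr)^2=\|Qx-PQx\|_{\tf}^2,
\end{align*}
where $P$ is the orthogonal projection of $\Ht$ onto the closed subspace $\overline{Q\Yy}$. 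From this identity everything follows cleanly.

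For (i): since $\tf_{{}_{\Yy}}[x]=\|(I-P)Qx\|_{\tf}^2$ and $x\mapsto(I-P)Qx$ is a linear map from $\Xx$ into $\Ht$, its associated quadratic form is exactly the quadratic form of the nonnegative sesquilinear form $(x,y)\mapsto\bigl((I-P)Qx\mid(I-P)Qy\bigr)_{\tf}$; in particular the parallelogram law holds and, by the Jordan–von Neumann polarization recalled in the excerpt, $\tf_{{}_{\Yy}}$ is a well-defined form on $\Xx$. (Alternatively one can check the parallelogram law directly from the infimum formula, but routing it through $\Ht$ avoids the $\inf$-manipulations.) For (ii): $\tf_{{}_{\Yy}}[x]=\inf_{y\in\Yy}\tf[x-y]\leq\tf[x-0]=\tf[x]$ gives $\tf_{{}_{\Yy}}\leq\tf$; and for $y_0\in\Yy$ we have $\tf_{{}_{\Yy}}[y_0]=\inf_{y\in\Yy}\tf[y_0-y]\leq\tf[y_0-y_0]=0$, so $\Yy\subseteq\ker\tf_{{}_{\Yy}}$.

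For (iii), the maximality: let $\ssf\in\fpx$ with $\ssf\leq\tf$ and $\Yy\subseteq\ker\ssf$. Fix $x\in\Xx$. For any $y\in\Yy$, the seminorm property of $\sqrt{\ssf[\cdot]}$ together with $\ssf[y]=0$ gives $\ssf[x]=\ssf[x-y+y]\leq\bigl(\sqrt{\ssf[x-y]}+\sqrt{\ssf[y]}\bigr)^2=\ssf[x-y]\leq\tf[x-y]$; taking the infimum over $y\in\Yy$ yields $\ssf[x]\leq\tf_{{}_{\Yy}}[x]$. Since $x$ was arbitrary, $\ssf\leq\tf_{{}_{\Yy}}$. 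Combined with (ii), this shows $\tf_{{}_{\Yy}}$ is the maximum of the set.

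The only genuinely delicate point is (i) — confirming that the pointwise infimum really is a quadratic form and not merely a subadditive nonnegative functional. The Hilbert-space reformulation as $\|(I-P)Qx\|_{\tf}^2$ dispatches this immediately, so I would present that identification first and let parts (ii) and (iii) follow; the triangle-inequality argument in (iii) is the one place where the seminorm structure of $\tf$ (as opposed to just its order properties) is used, and it is worth spelling out.
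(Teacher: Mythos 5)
Your proof is correct and follows essentially the same route as the paper: both identify $\tf_{{}_{\Yy}}[x]$ with $\|(I-P)(x+\ker\tf)\|_{\tf}^2$ for the orthogonal projection $P$ onto the closure of the image of $\Yy$ in $\Ht$, and both establish maximality via the triangle inequality for the seminorm $\sqrt{\ssf[\cdot]}$. Your write-up merely makes explicit a couple of points the paper leaves implicit (the sesquilinear form behind the quadratic form, and the inequality $\tf_{{}_{\Yy}}\leq\tf$).
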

\begin{proof}
Let $\Yy_{\tf}$ be the following subspace of $\Ht$
\begin{align*}
\Yy_{\tf}:=\big\{y+\ker\tf~\big|~y\in \Yy\big\}
\end{align*}
and consider the orthogonal projection $P$ from $\Ht$ onto $\overline{\Yy_{\tf}}$ (the closure of $\Yy_{\tf}$). Then for all $x\in\Xx$
\begin{align*}
\big\|(I-P)(x+\ker\tf)\big\|_{\tf}^2=\mathrm{dist}^2(x+\ker\tf,\overline{\Yy_{\tf}})=\inf\limits_{y\in\Yy}\big\|(x-y)+\ker\tf\big\|_{\tf}^2=\inf_{y\in\Yy}\tf[x-y].
\end{align*}
Consequently, $\tf_{{}_{\Yy}}$ is a form, indeed, and $\Yy\subseteq\ker\tf_{{}_{\Yy}}$. To show the maximality, assume that the quadratic form of $\ssf$ vanishes on $\Yy$ and $\ssf\leq\tf$. According to the triangle inequality we have
\begin{align*}
\ssf[x]\leq\ssf[x-y]\leq\tf[x-y]
\end{align*}
for all $y\in\Yy$, and hence,
\begin{align*}
\ssf[x]\leq\inf\limits_{y\in\Yy}\tf[x-y]=\tf_{{}_{\Yy}}[x].
\end{align*}
\end{proof}
The form $\tf_{{}_{\Yy}}$ is called the \emph{short of the form $\tf$ to the subspace $\Yy$}.\\
It follows from the definition that if $\tf$ and $\wf$ are forms and $\Yy$ and $\Zz$  are linear subspaces, then
\begin{align*}
\big((\tf\leq\wf)\quad\wedge\quad\Yy\subseteq\Zz\big)\quad\Longrightarrow\quad\tf_{\Zz}\leq\wf_{\Yy}.
\end{align*}
Now, we are in position to state and prove the main result of this section.
\begin{theorem} \label{kerdec}
Let $\tf,\wf\in\fpx$ be forms. Then there exists a $(\ll,\perp)$-type decomposition of $\tf$ with respect to $\wf$. Namely,
\begin{align*}
\tf=\tkerw+(\tf-\tkerw),
\end{align*}
where the first summand is $\wf$-absolutely continuous and the second one is $\wf$-singular.
Furthermore, $\tkerw$ is the maximum of the set
\begin{align*}
\big\{\ssf\in\fpx~\big|~(\ssf\leq\tf)~\wedge~(\ssf\ll\wf) \big\}.
\end{align*}
The decomposition is unique precisely when $\tkerw$ is dominated by $\wf$.
\end{theorem}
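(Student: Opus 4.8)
The plan is to read all three assertions off the Lemma together with Theorem~\ref{LDT}, with no new analysis. First I would apply the Lemma with $\Yy=\ker\wf$, so that $\tkerw$ is the short of $\tf$ to $\ker\wf$; since for a form $\ssf$ the inclusion $\ker\wf\subseteq\ker\ssf$ is, by definition, exactly the relation $\ssf\ll\wf$, the Lemma states precisely that $\tkerw$ is the maximum of $\set{\ssf\in\fpx}{(\ssf\leq\tf)\wedge(\ssf\ll\wf)}$; in particular $\tkerw\leq\tf$ (so $\tf-\tkerw$ is a form) and $\tkerw\ll\wf$, which already settles the $\wf$-absolute continuity of the first summand and the extremality claim. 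To see that $\tf-\tkerw$ is $\wf$-singular I would take $\ssf\in\fpx$ with $\ssf\leq\tf-\tkerw$ and $\ssf\leq\wf$: then $\ssf\ll\wf$, and since also $\tkerw\ll\wf$ and $\ker(\ssf+\tkerw)=\ker\ssf\cap\ker\tkerw$, we get $\ssf+\tkerw\ll\wf$ while $\ssf+\tkerw\leq\tf$; maximality of $\tkerw$ forces $\ssf+\tkerw\leq\tkerw$, i.e.\ $\ssf=0$. Hence $(\tf-\tkerw)\perp\wf$ and $\tf=\tkerw+(\tf-\tkerw)$ is a $(\ll,\perp)$-type decomposition.

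For the uniqueness criterion I would first record two facts that follow immediately from the definitions: (i) $\ssf\ll_{\mathrm{s}}\wf$ implies $\ssf\ll\wf$ (apply the defining implication to a constant sequence $x_n\equiv x$ with $\wf[x]=0$); and (ii) if $\vf\leq_{\mathrm{d}}\wf$ and $\vf\leq\ssf$ for some $\ssf$ with $\ssf\perp\wf$, then $\vf=0$ (choose $c\geq1$ with $\vf\leq c\wf$; then $\tfrac1c\vf$ is a common minorant of $\ssf$ and $\wf$, hence $0$). Assuming $\tkerw\leq_{\mathrm{d}}\wf$, let $\tf=\tf_1+\tf_2$ be an arbitrary $(\ll,\perp)$-type decomposition. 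By the maximality of $\tkerw$, $\tf_1\leq\tkerw$, so $\tkerw-\tf_1$ is a form with $\tkerw-\tf_1\leq\tf-\tf_1=\tf_2\perp\wf$ and $\tkerw-\tf_1\leq\tkerw\leq_{\mathrm{d}}\wf$; by (ii) it vanishes, so $\tf_1=\tkerw$ and $\tf_2=\tf-\tkerw$, which is the required uniqueness.

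Conversely, to show that uniqueness of the $(\ll,\perp)$-type decomposition implies $\tkerw\leq_{\mathrm{d}}\wf$, I would argue as follows. By Theorem~\ref{LDT} we have $\dwt\ll_{\mathrm{s}}\wf$ — hence $\dwt\ll\wf$ by (i) — and $(\tf-\dwt)\perp\wf$, so $(\dwt,\tf-\dwt)$ is itself a $(\ll,\perp)$-type decomposition; uniqueness then gives $\tkerw=\dwt$. Moreover, by (i) every $(\ll_{\mathrm{s}},\perp)$-type (Lebesgue) decomposition of $\tf$ with respect to $\wf$ is in particular a $(\ll,\perp)$-type one, so the hypothesis forces the Lebesgue decomposition of $\tf$ to be unique as well; the uniqueness clause of Theorem~\ref{LDT} then yields $\dwt\leq_{\mathrm{d}}\wf$, and therefore $\tkerw=\dwt\leq_{\mathrm{d}}\wf$ — in particular, in the unique case the shorted part is even closable with respect to $\wf$. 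The whole argument is bookkeeping with kernels, scalars, and the maximality provided by the Lemma; the single genuine external ingredient is the uniqueness clause of Theorem~\ref{LDT}, so I do not anticipate any real obstacle.
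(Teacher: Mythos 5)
Your proposal is correct and follows essentially the same route as the paper: maximality of $\tkerw$ from the shorting lemma, singularity of the remainder via that maximality, forward uniqueness by exhibiting $\tfrac1c(\tkerw-\tf_1)$ as a common minorant of $\tf_2$ and $\wf$, and the converse by reducing to the uniqueness clause of Theorem~\ref{LDT}. The only cosmetic difference is that you justify $\ll_{\mathrm{s}}\Rightarrow\ll$ with constant sequences, whereas the paper derives the comparison $\dwt\leq\tkerw$ from the parallel-sum formula; both are valid.
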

\begin{proof}
It follows from the previous lemma that $\tkerw\ll\wf$, and that $\tkerw$ is maximal. Let $\ssf$ be a form such that $\ssf\leq\wf$ and $\ssf\leq\tf-\tkerw$. Since $\tkerw\leq\tkerw+\ssf\leq\tf$ and the quadratic form of $\tkerw+\ssf$ vanishes on $\ker\wf$, the maximality of $\tkerw$ implies that $\ssf=0$.\\
It remains only to prove that the decomposition is unique if and only if $\tkerw$ is dominated by $\wf$. Let $c$ be a constant such that $\tf\leq c\cdot\wf$ (we may assume that $c>1$) and let $\tf=\tf_1+\tf_2$ be an $(\ll,\perp)$-type decomposition. Since $\tkerw$ is maximal, we have
\begin{align*}
\tf_2=\tf-\tf_1\geq\tkerw-\tf_1\geq\frac{1}{c}(\tkerw-\tf_1)\hspace{0.5cm}\mbox{and}\hspace{0.5cm}\wf\geq\frac{1}{c}\tkerw\geq\frac{1}{c}(\tkerw-\tf_1)
\end{align*}
which is a contradiction, unless $\tkerw=\tf_1$.
Finally, observe that $\dwt\leq\tkerw$, and therefore, every $(\ll_{\mathrm{s}},\perp)$-type decomposition is a $(\ll,\perp)$-type decomposition as well. Indeed,
\begin{align*}
\tkerw[x]=\inf_{y\in\ker\wf}\tf[x-y]=\inf_{y\in\ker\wf}\{\tf[x-y]+n\wf[y]\}\geq\inf_{y\in\Xx}\{\tf[x-y]+n\wf[y]\}=(\tf:n\wf)[x]
\end{align*}
holds for all $n\in\mathbb{N}$ and $x\in\Xx$, therefore
\begin{align*}
\dwt=\sup\limits_{n\in\mathbb{N}}(\tf:n\wf)\leq\tkerw.
\end{align*}
Thus if the $(\ll,\perp)$-type decomposition is unique, then $\tkerw=\dwt$, and $\tkerw\leq_{\mathrm{d}}\wf$ according to Theorem \ref{LDT}.
\end{proof}
Observe that $(\ty)_{\Yy}=\ty$ for each subspace $\Yy$, i.e., shortening to a subspace is an idempotent operation. Furthermore, $\tf\ll\wf$ precisely when $\tkerw=\tf$.

\begin{remark} Let $\Aa$ be a complex algebra, let $\mathscr{I}\subseteq\Aa$ be a left ideal, and let $\tf$ be a \emph{representable form} on $\Aa$. That is, a nonnegative sesquilinear form, which satisfies
\begin{align*}
(\forall a\in\Aa)~(\exists \lambda_a>0)~(\forall b\in \Aa):\quad\tf[ab]\leq\lambda_a\tf[b].
\end{align*}
A simple observation shows that $\tf_{\mathscr{I}}$ is representable
\begin{align*}
\tf_{\mathscr{I}}[ab]=\inf_{x\in\mathscr{I}}\tf[ab-x]\leq\inf_{x\in\mathscr{I}}\tf[ab-ax]
\leq\inf_{x\in\mathscr{I}}\lambda_a\tf[b-x]=\lambda_a\tf_{\mathscr{I}}[b].
\end{align*}
If $\wf$ is a representable form on $\Aa$ as well, then $\ker\wf$ is obviously a left ideal, and hence we have the following decomposition
\begin{align*}\tf=\tf_{{}_{\ker\wf}}+(\tf-\tf_{{}_{\ker\wf}})
\end{align*}
where $\tf_{{}_{\ker\wf}}\ll\wf$, $(\tf-\tf_{{}_{\ker\wf}})\perp\wf$, and $\tf_{{}_{\ker\wf}}$ is representable. For a Lebesgue-type decomposition of representable forms we refer the reader to \cite{jot}.
\end{remark}

Finally, we show that the shorted form $\ty$ possesses an extremal property. In fact, we prove that $\ty$ is a disjoint part of $\tf$ for every subspace $\Yy$, or equivalently, $\ty$ is a so-called $\tf$-quasi unit. After recalling the corresponding definitions, in Lemma \ref{t-q-u char} we give a characterization of the extremal points of the convex set
\begin{align*}[0,\tf]=\big\{\wf\in\fpx~\big|~\wf\leq\tf\big\}.
\end{align*}

We say that $\uf$ is a \emph{$\tf$-quasi-unit}, if $\mathbf{D}_{\uf}\tf=\uf$. The form $\uf$ is a \emph{disjoint part} of $\tf$ if $\uf$ and $\tf-\uf$ are singular. The set of extremal points of a convex set $C$ is denoted by $\operatorname{ex} C$. For the terminology see \cite{el,riesz,cof}.

\begin{lemma}\label{t-q-u char} Let $\tf$ and $\uf$ be forms on $\D$ such that $\uf\leq\tf$, and let $\lambda>0$ and $\mu>0$ be arbitrary real numbers. Then the following statements are equivalent.
\begin{multicols}{2}
\begin{itemize}
\item[$(i)$] $\uf$ is a $\tf$-quasi-unit, i.e., $\mathbf{D}_{\uf}\tf=\uf$.
\item[$(ii)$] There exists $\wf$ such that $\uf=\dwt$.
\item[$(iii)$] $\uf$ is a disjoint part of $\tf$.
\item[$(iv)$] $\uf\in\mathrm{ex}[0,\tf]$.
\item[$(v)$] $(\lambda\uf):(\mu\tf)=\frac{\lambda\mu}{\lambda+\mu}\uf$.
\item[$(vi)$] $(\lambda\uf):\tf=\uf:(\lambda\tf)$.
\end{itemize}
\end{multicols}
\end{lemma}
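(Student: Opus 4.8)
The plan is to fix the form $\tf$ and to realise all six conditions as operator identities inside the Hilbert space $\Ht$. Every form $\ssf$ with $\ssf\le\tf$ satisfies $\ker\tf\subseteq\ker\ssf$, so it descends to $\Xx/_{\ker\tf}$ and, being dominated there by $\|\cdot\|_{\tf}^{2}$, extends to a bounded form on $\Ht$; this yields an affine, order preserving bijection $\ssf\leftrightarrow S$ between the order interval $[0,\tf]$ and the operator interval $[0,I]$ of $\bh$, under which $\tf\leftrightarrow I$, parallel sums of forms go to operator parallel sums, and suprema of increasing sequences of forms go to strong operator limits. (These facts are elementary and essentially contained in \cite{lebdec}.) Writing $\uf\leftrightarrow U$ with $0\le U\le I$, the strategy is to show that each of $(i)$--$(vi)$ is equivalent to the single statement that \emph{$U$ is an orthogonal projection.}

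Four of the equivalences then follow from routine functional calculus. For $(iv)$: an affine order isomorphism carries $\operatorname{ex}[0,\tf]$ onto $\operatorname{ex}[0,I]$, and the extreme points of the operator interval $[0,I]$ are classically just the orthogonal projections. For $(i)$: $\tf:n\uf\leftrightarrow I:nU=nU(I+nU)^{-1}$, whose strong limit is the orthogonal projection onto $\overline{\ran U}$, so $\mathbf{D}_{\uf}\tf=\uf$ says exactly that $U$ equals that projection. For $(iii)$: first note that $\uf\perp(\tf-\uf)$ is equivalent to $\uf:(\tf-\uf)=0$ --- one direction is trivial since the parallel sum lies below both summands, the other because $\ssf\le\uf$ and $\ssf\le\tf-\uf$ force $\tfrac12\ssf=\ssf:\ssf\le\uf:(\tf-\uf)$ by monotonicity --- and then $\uf:(\tf-\uf)\leftrightarrow U:(I-U)=U(I-U)$, which vanishes iff $U=U^{2}$. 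Conditions $(v)$ and $(vi)$ are pure computation with the operator parallel sum: $(\lambda\uf):(\mu\tf)\leftrightarrow\lambda\mu\,U(\lambda U+\mu I)^{-1}$, which equals $\tfrac{\lambda\mu}{\lambda+\mu}U$ exactly when $\lambda U^{2}=\lambda U$; and $(\lambda\uf):\tf\leftrightarrow\lambda U(\lambda U+I)^{-1}$ while $\uf:(\lambda\tf)\leftrightarrow\lambda U(U+\lambda I)^{-1}$, and these agree iff $(1-\lambda)U^{2}=(1-\lambda)U$ (hence, for $\lambda\ne1$, iff $U=U^{2}$).

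It remains to bring in $(ii)$. Here $(i)\Rightarrow(ii)$ is trivial with $\wf:=\uf$. For the converse, suppose $\uf=\dwt$ for some $\wf$ but $U$ is not a projection; then the spectrum of $U$ meets $(0,1)$, so there is $\delta\in(0,\tfrac12)$ for which the spectral projection $E$ of $U$ corresponding to $[\delta,1-\delta]$ is non-zero. Let $\chi\in[0,\tf]$ be the form with $\chi\leftrightarrow U+\delta E$. One checks that $U+\delta E\le I$, that $U\le U+\delta E\le 2U$ (the right-hand inequality because $\delta E\le U$, a consequence of $(E\xi\mid\xi)\le\delta^{-1}(U\xi\mid\xi)$ for all $\xi$), and that $\chi\ne\uf$ since $E\ne0$. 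Thus $\uf$ and $\chi$ are mutually dominated, so $\ker\chi=\ker\uf$ and the two-sided estimate transports the defining sequential condition from $\uf$ to $\chi$; hence $\chi\ll_{\mathrm{s}}\wf$. As also $\chi\le\tf$, the extremality clause of Theorem~\ref{LDT} gives $\chi\le\dwt=\uf$, contradicting $\chi\ne\uf$. Therefore $U=U^{2}$, and the list of equivalences closes up.

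The crux is this last step $(ii)\Rightarrow(i)$. The difficulty is that strong absolute continuity is \emph{not} inherited under arbitrary domination, so one cannot argue ``$\ssf\le\uf\ll_{\mathrm{s}}\wf\Rightarrow\ssf\ll_{\mathrm{s}}\wf$''; the point is instead that $\ll_{\mathrm{s}}$ \emph{is} stable within a fixed equivalence class of forms, and that $\uf$ can be strictly enlarged inside its equivalence class while remaining below $\tf$ precisely when $U$ fails to be a projection --- which is incompatible with $\uf$ being the largest closable minorant of $\tf$ relative to $\wf$.
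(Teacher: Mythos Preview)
Your proof is correct and takes a genuinely different route from the paper's. The paper argues intrinsically within the calculus of forms: it proves only the cycle $(i)\Rightarrow(v)\Rightarrow(vi)\Rightarrow(i)$, using the identities $\tf:\wf=\mathbf{D}_{\wf}(\tf:\wf)=\dwt:\wf$ for $(i)\Rightarrow(v)$, a direct substitution for $(v)\Rightarrow(vi)$, and the definition $\mathbf{D}_{\uf}\tf=\sup_n(\tf:n\uf)$ for $(vi)\Rightarrow(i)$; the remaining equivalences with $(ii)$, $(iii)$, $(iv)$ are not proved here but deferred to \cite[Theorem 11]{cof}.

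Your approach instead transports the whole order interval $[0,\tf]$ onto the operator interval $[0,I]\subseteq\mathscr{B}(\Ht)$ and shows that every one of the six conditions is equivalent to the single spectral fact that $U=U^{2}$. This has two advantages: it is self-contained (no appeal to \cite{cof}), and it makes transparent \emph{why} the six conditions coincide --- they are all avatars of ``$U$ is a projection''. The cost is the bookkeeping needed to verify that the correspondence $\ssf\leftrightarrow S$ respects parallel sums and increasing suprema, which you rightly flag as routine. Your treatment of $(ii)\Rightarrow(i)$ via a spectral perturbation $U\mapsto U+\delta E$ combined with the extremality clause of Theorem~\ref{LDT} is a particularly clean argument that has no counterpart in the paper's presentation. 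You also correctly note the degenerate case $\lambda=1$ in $(vi)$, where the condition becomes tautological; this is a genuine ambiguity in the lemma's phrasing (the paper's own proof of $(vi)\Rightarrow(i)$ tacitly uses $(vi)$ for \emph{all} $n$, not a single fixed $\lambda$).
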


\begin{proof}
Here we prove only $(i)\Rightarrow (v)\Rightarrow (vi)\Rightarrow (i)$. The remainder can be found in \cite[Theorem 11]{cof}. Assume that $\uf$ is a $\tf$-quasi unit, and observe that
\begin{align*}
(\lambda\uf):(\mu\tf)=(\lambda\uf):\big(\mathbf{D}_{\lambda\uf}(\mu\tf)\big)=(\lambda\uf):(\mu\mathbf{D}_{\uf}\tf)=(\lambda\uf):(\mu\uf)=\frac{\lambda\mu}{\lambda+\mu}\uf.
\end{align*}
according to the properties of the parallel sum and the following equalities
\begin{align*}
\tf:\wf=\mathbf{D}_{\wf}(\tf:\wf)=\dwt:\wf
\end{align*}
(see \cite[Lemma 2.3, Lemma 2.4, Proposition 2.7]{lebdec}). Assuming $(v)$ it is clear that
\begin{align*}
(\lambda\uf):\tf=\frac{\lambda}{1+\lambda}\uf=\uf:(\lambda\tf).
\end{align*}
Finally, since $\uf\leq\tf$, property $(vi)$ implies that
\begin{align*}
\mathbf{D}_{\uf}\tf=\sup\limits_{\nen}\big(\tf:(n\uf)\big)=\sup\limits_{\nen}\big((n\tf):\uf\big)=\mathbf{D}_{\tf}\uf=\uf.
\end{align*}
\end{proof}

\begin{theorem}\label{shortext}
Let $\tf$ be a form on $\Xx$ and let $\Yy$ be a linear subspace of $\Xx$. Then $\tf_{{}_{\Yy}}$ is an extremal point of the convex set \begin{align*}\{\ssf\in\fpx~|~0\leq\ssf\leq\tf\}
\end{align*}
and
\begin{align*}
\mathbf{D}_{\tf_{{}_{\Yy}}}\tf=\tf_{{}_{\Yy}}.
\end{align*}
\end{theorem}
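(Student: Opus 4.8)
The plan is to reduce everything to the characterization of $\tf$-quasi-units established in Lemma~\ref{t-q-u char}. By that lemma, statements $(i)$ ($\mathbf{D}_{\tf_{{}_{\Yy}}}\tf=\tf_{{}_{\Yy}}$) and $(iv)$ ($\tf_{{}_{\Yy}}\in\mathrm{ex}[0,\tf]$) are equivalent, so it suffices to prove either one; I would target $(i)$, i.e.\ that $\tf_{{}_{\Yy}}$ is $\tf$-absolutely-continuous-part-stable, since the structure of the short makes the relevant infima transparent. Recall $\mathbf{D}_{\tf_{{}_{\Yy}}}\tf=\sup_{n}(\tf:n\tf_{{}_{\Yy}})$ and $\tf_{{}_{\Yy}}\leq\tf$, so automatically $\mathbf{D}_{\tf_{{}_{\Yy}}}\tf\leq\tf_{{}_{\Yy}}$; the content is the reverse inequality.

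The key computation I would carry out is the following. Fix $x\in\Xx$ and $n\in\mathbb{N}$. Using the defining formula for the parallel sum and then the defining formula for the short,
\begin{align*}
(\tf:n\tf_{{}_{\Yy}})[x]=\inf_{z\in\Xx}\big\{\tf[x-z]+n\,\tf_{{}_{\Yy}}[z]\big\}=\inf_{z\in\Xx}\inf_{y\in\Yy}\big\{\tf[x-z]+n\,\tf[z-y]\big\}.
\end{align*}
The idea is that by substituting $z=y$ in the inner infimum (a legitimate choice, since $z$ ranges over all of $\Xx$ and $y\in\Yy\subseteq\Xx$), the second term vanishes and we get $\inf_{y\in\Yy}\tf[x-y]=\tf_{{}_{\Yy}}[x]$ as a lower bound candidate; more carefully, one shows $(\tf:n\tf_{{}_{\Yy}})[x]\geq\tf_{{}_{\Yy}}[x]-\varepsilon$ for $n$ large, or directly that the double infimum equals $\tf_{{}_{\Yy}}[x]$ for every $n$. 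For the latter: the choice $z=y$ gives $\leq\tf_{{}_{\Yy}}[x]$; conversely, for any $z,y$ the triangle inequality for the seminorm $\tf[\cdot]^{1/2}$ together with $\tf_{{}_{\Yy}}\leq\tf$ and $\Yy\subseteq\ker\tf_{{}_{\Yy}}$ yields $\tf_{{}_{\Yy}}[x]=\tf_{{}_{\Yy}}[x-z]\leq$ (something controlled by $\tf[x-z]$ and $\tf_{{}_{\Yy}}[z]$); pushing this through shows $\tf_{{}_{\Yy}}[x]\leq\tf[x-z]+n\,\tf[z-y]$ is not literally true for all $z,y$, so the honest route is to take the supremum over $n$ at the end: since $\tf_{{}_{\Yy}}$ vanishes on $\Yy$, for any $\delta>0$ pick $y_0\in\Yy$ with $\tf[x-y_0]\leq\tf_{{}_{\Yy}}[x]+\delta$, and then for $z$ the parallel-sum infimand is at most $\tf[x-z]+n\tf[z-y_0]$, and a standard argument (as in the proof of $\dwt\leq\tkerw$ in Theorem~\ref{kerdec}, read in reverse) shows $\sup_n(\tf:n\tf_{{}_{\Yy}})[x]\geq\tf_{{}_{\Yy}}[x]$. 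Hence $\mathbf{D}_{\tf_{{}_{\Yy}}}\tf=\tf_{{}_{\Yy}}$.

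An alternative, perhaps cleaner, route I would keep in reserve is to use equivalence $(i)\Leftrightarrow(ii)$: exhibit a form $\wf$ with $\tf_{{}_{\Yy}}=\dwt$. The natural candidate is $\wf:=\tf_{{}_{\Yy}}$ itself — but one must check $\mathbf{D}_{\tf_{{}_{\Yy}}}\tf=\tf_{{}_{\Yy}}$, which is circular — so instead take $\wf$ to be a form whose kernel is exactly $\overline{\Yy_\tf}$ pulled back to $\Xx$, e.g.\ $\wf[x]:=\|P(x+\ker\tf)\|_\tf^2$ with $P$ the projection onto $\overline{\Yy_\tf}$ from the lemma's proof; then $\tf=\wf+\tf_{{}_{\Yy}}$ is precisely the orthogonal decomposition in $\Ht$, $\wf$ and $\tf_{{}_{\Yy}}$ are mutually singular (their sum being $\leq\tf$ forces any common minorant to act between complementary subspaces), and one identifies $\tf_{{}_{\Yy}}$ as the singular part, hence a disjoint part of $\tf$, which is $(iii)$. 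Once any one of $(i),(iii),(iv)$ is in hand, Lemma~\ref{t-q-u char} delivers the rest of the theorem's assertions.

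The main obstacle I anticipate is the order of quantifiers in the double infimum: one cannot naively interchange $\inf_z$ with $\sup_n$, so the equality $\mathbf{D}_{\tf_{{}_{\Yy}}}\tf=\tf_{{}_{\Yy}}$ must be obtained by the two-sided estimate, with the nontrivial direction being $\tf_{{}_{\Yy}}\leq\sup_n(\tf:n\tf_{{}_{\Yy}})$; here the right move is to first approximate within $\Yy$ (choosing $y_0$ nearly realizing the short) and only then run the $n\to\infty$ argument, mirroring the computation at the end of the proof of Theorem~\ref{kerdec} but with the roles of the forms adjusted. The singularity-based route sidesteps the quantifier issue but shifts the work onto verifying that the orthogonal decomposition in $\Ht$ descends to a genuine singularity of forms on $\Xx$; both are routine once set up correctly, and I would present whichever yields the shorter write-up.
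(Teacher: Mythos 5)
Your overall strategy --- reduce to Lemma \ref{t-q-u char} and verify any one of its equivalent conditions --- is the right one, and your ``reserve'' route via condition $(iii)$ is in fact the one the paper takes. But your primary route contains a genuine error: you declare that $\mathbf{D}_{\ty}\tf\leq\ty$ is ``automatic'' because $\ty\leq\tf$, and you locate all the work in the reverse inequality. This is exactly backwards. For a general minorant $\wf\leq\tf$ the inequality $\mathbf{D}_{\wf}\tf\leq\wf$ is \emph{false}: take $\wf=\tfrac{1}{2}\tf$, so that $\tf:n\wf=\tfrac{n}{n+2}\tf$ and hence $\mathbf{D}_{\wf}\tf=\tf\not\leq\tfrac{1}{2}\tf$ whenever $\tf\neq 0$. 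So $\mathbf{D}_{\ty}\tf\leq\ty$ is precisely the point where the special structure of the short must enter, and you give no argument for it. Conversely, the inequality you struggle with, $\ty\leq\mathbf{D}_{\ty}\tf$, is the genuinely automatic one: by monotonicity of the parallel sum, $\tf:n\ty\geq\ty:n\ty=\tfrac{n}{n+1}\ty$, and taking the supremum over $n$ finishes it --- no choice of $y_0$, no interchange of quantifiers. The missing direction can be repaired, but only by the computation you did not make: the estimate at the end of the proof of Theorem \ref{kerdec} gives $\mathbf{D}_{\wf}\tf\leq\tf_{\ker\wf}$ for every $\wf$, and applying this with $\wf=\ty$, together with $\Yy\subseteq\ker\ty$ and the antitonicity of the short in the subspace, yields $\mathbf{D}_{\ty}\tf\leq\tf_{{}_{\ker\ty}}\leq\tf_{{}_{\Yy}}$.

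The paper avoids all of this by proving condition $(iii)$ of Lemma \ref{t-q-u char} directly from the maximality clause in the definition of the short: if $\ssf\leq\ty$ and $\ssf\leq\tf-\ty$, then $\ty+\ssf$ vanishes on $\Yy$ (since $\ssf\leq\ty$ forces $\ssf$ to vanish wherever $\ty$ does) and $\ty+\ssf\leq\tf$, so maximality of $\ty$ gives $\ty+\ssf\leq\ty$, i.e.\ $\ssf=0$. Your reserve route aims at the same condition $(iii)$ but through the orthogonal decomposition $\tf=\wf+\ty$ in $\Ht$, and it leaves the crucial singularity of $\wf$ and $\ty$ as an unproved parenthetical (``forces any common minorant to act between complementary subspaces''); making that precise would in effect reproduce the maximality argument above. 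In short: the statement is true and your framework is sound, but the one inequality your main route takes for granted is the only one that actually requires proof, and your fallback omits the key verification.
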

\begin{proof} According to the previous lemma, it is enough to show that $\ty$ is a disjoint part of $\tf$. That is, $\ty$ and $\tf-\ty$ are singular. Let $\ssf$ be a form such that $\ssf\leq\ty$ and $\ssf\leq\tf-\ty$. Then $\ty+\ssf$ vanishes on $\Yy$  and $\ty+\ssf\leq\tf$, thus the maximality of $\ty$ implies that $\ssf=0$.
\end{proof}

\section{Bounded positive operators}\label{operators}
Let $\Hh$ be a complex Hilbert space with the inner product $(\cdot~|~\cdot)$ and the norm $\|\cdot\|$. The set of bounded positive operators will be denoted by $\bph$. The notation $A\leq B$ stands for the usual relation
\begin{align*}
\forall x\in\Hh:\quad(Ax~|~x)\leq(Bx~|~x).
\end{align*}
For every $A\in\bph$ we set
\begin{align*}
\forall x,y\in\Hh:\quad\tf_{{}_A}(x,y):=(Ax~|~y)
\end{align*}
which defines a bounded nonnegative form on $\mathscr{H}$. Conversely, in view of the Riesz-representation theorem, the correspondence $A\mapsto\tf_{{}_A}$ defines a bijection between bounded positive operators and bounded nonnegative forms. Consequently, we can define the domination, (strong) absolute continuity, and singularity analogously to the ones defined for forms. We write $A\leq_{\mathrm{d}}B$ if there exists a constant $c$ such that $A\leq c\cdot B$. If $Bx=0$ implies that $Ax=0$ for all $x\in\Hh$, we say that $A$ is \emph{$B$-absolutely continuous} ($A\ll B$). The operators $A$ and $B$ are \emph{singular} ($A\perp B)$ if $0$ is the only positive operator which is dominated by both $A$ and $B$. Finally, $A$ is \emph{strongly $B$-absolutely continuous} ($A\ll_{\mathrm{s}} B$) if for any sequence $(x_n)_{n\in\mathbb{N}}\in\Hh^{\mathbb{N}}$
\begin{align*}
\big((A(x_n-x_m)~|~x_n-x_m)\to0~\wedge~(Bx_n~|~x_n)\to0\big)\quad\Rightarrow\quad(Ax_n~|~x_n)\to0.
\end{align*}
Remark that
\begin{align*}
A\ll B~\Longleftrightarrow~\ker B\subseteq \ker A\hspace{0.5cm}\mbox{and}\hspace{0.5cm}A\perp B~\Longleftrightarrow~\ran A^{1/2}\cap\ran B^{1/2}=\{0\},
\end{align*}
see \cite{andolebdec} or \cite{tarcsayopdec}.
It was proved by Krein that if $\Mm$ is a closed linear subspace of $\Hh$ and $A\in\bph$, then the set
\begin{align*}
\big\{S\in\bph~\big|~(S\leq A)~\wedge~(\ran S\subseteq\Mm)\big\}
\end{align*}
possesses a greatest element. This follows immediately from our previous results, and this is why we say that the form $\tf_{{}_{\Yy}}$ is the \emph{short of} $\tf$ \emph{to the subspace} $\Yy$. Indeed, let $\tf(x,y)=(Ax~|~y)$ and consider the form $\tf_{\Mm^{\perp}}$. Since $\tf_{\Mm^{\perp}}$ is a bounded form, there exists a unique $S\in\bph$ such that $\tf_{\Mm^{\perp}}(x,y)=(Sx~|~y)$ and
\begin{align*}
x\in\Mm^{\perp}\Longrightarrow\tf_{\Mm^{\perp}}[x]=0\Longrightarrow(Sx~|~x)=0\Longrightarrow\Mm^{\perp}\subseteq\ker S\Longrightarrow \ran S\subseteq\Mm.
\end{align*}
The maximality of $S$ follows from the maximality of $\tf_{\Mm^{\perp}}$. Now, since the map $A\mapsto\tf_{{}_A}$ is an order preserving positive homogeneous map from $\bph$ into $\mathcal{F}_+(\mathscr{H})$, the following theorem is an immediate consequence of Theorem \ref{kerdec}.

\begin{theorem}\label{opdec}
Let $A$ and $B$ be bounded positive operators on $\mathscr{H}$. Then there is a decomposition of $A$ with respect to $B$ into $B$-absolutely continuous and $B$-singular parts. Namely,
\begin{align*}
A=A_{{}_{\ll,B}}+A_{{}_{{\perp},B}}.
\end{align*}
The decomposition is unique, precisely when $A_{{}_{\ll,B}}$ is dominated by $B$.
\end{theorem}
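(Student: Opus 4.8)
The plan is to transport the short-type decomposition of forms, Theorem~\ref{kerdec}, across the order-preserving bijection $S\mapsto\tf_{{}_S}$ between $\bph$ and the bounded nonnegative forms on $\mathscr{H}$ recalled above. First I would set $\tf:=\tf_{{}_A}$ and $\wf:=\tf_{{}_B}$ and apply Theorem~\ref{kerdec} to get
\begin{align*}
\tf_{{}_A}=(\tf_{{}_A})_{{}_{\ker\tf_{{}_B}}}+\bigl(\tf_{{}_A}-(\tf_{{}_A})_{{}_{\ker\tf_{{}_B}}}\bigr),
\end{align*}
where the first summand is $\tf_{{}_B}$-absolutely continuous and the second is $\tf_{{}_B}$-singular. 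Each summand is a nonnegative form lying below the bounded form $\tf_{{}_A}$, hence itself bounded, so by the Riesz representation theorem each equals $\tf_{{}_S}$ for a unique $S\in\bph$; denote these operators by $A_{{}_{\ll,B}}$ and $A_{{}_{{\perp},B}}$. Adding, and using that $S\mapsto\tf_{{}_S}$ is additive and injective, gives $A=A_{{}_{\ll,B}}+A_{{}_{{\perp},B}}$.

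Next I would translate the form-level properties into operator language. Since $\tf_{{}_B}[x]=(Bx~|~x)=\|B^{1/2}x\|^2$, one has $\ker\tf_{{}_B}=\ker B$, so the inclusion $\ker\tf_{{}_B}\subseteq\ker(\tf_{{}_A})_{{}_{\ker\tf_{{}_B}}}$ expressing $(\tf_{{}_A})_{{}_{\ker\tf_{{}_B}}}\ll\tf_{{}_B}$ reads $\ker B\subseteq\ker A_{{}_{\ll,B}}$, i.e.\ $A_{{}_{\ll,B}}\ll B$. For the singular part, because $S\mapsto\tf_{{}_S}$ is an order isomorphism onto the bounded forms with $\tf_{{}_0}=0$, a positive operator $C$ lies below both $A_{{}_{{\perp},B}}$ and $B$ if and only if $\tf_{{}_C}$ lies below both $\tf_{{}_A}-(\tf_{{}_A})_{{}_{\ker\tf_{{}_B}}}$ and $\tf_{{}_B}$, and then $C=0$ exactly when $\tf_{{}_C}=0$ (any form below a bounded form is bounded, so nothing escapes the correspondence); hence $\tf_{{}_A}-(\tf_{{}_A})_{{}_{\ker\tf_{{}_B}}}\perp\tf_{{}_B}$ forces $A_{{}_{{\perp},B}}\perp B$. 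This establishes the existence of the claimed decomposition.

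For the uniqueness clause I would note that $S\mapsto\tf_{{}_S}$ induces a bijection between the $(\ll,\perp)$-type decompositions of $A$ with respect to $B$ and those of $\tf_{{}_A}$ with respect to $\tf_{{}_B}$: a decomposition $A=A_1+A_2$ with $A_1\ll B$, $A_2\perp B$ corresponds, once more via $\ker\tf_{{}_B}=\ker B$ and the order isomorphism, to $\tf_{{}_A}=\tf_{{}_{A_1}}+\tf_{{}_{A_2}}$ with $\tf_{{}_{A_1}}\ll\tf_{{}_B}$ and $\tf_{{}_{A_2}}\perp\tf_{{}_B}$, and conversely. By the last assertion of Theorem~\ref{kerdec}, the form-level decomposition is unique precisely when $(\tf_{{}_A})_{{}_{\ker\tf_{{}_B}}}$ is dominated by $\tf_{{}_B}$; transported back through the order isomorphism, this says $A_{{}_{\ll,B}}\leq_{\mathrm{d}}B$, which is the asserted uniqueness criterion.

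I do not expect a substantive obstacle: the text itself flags the result as an immediate consequence of Theorem~\ref{kerdec}, so the only real work is the dictionary bookkeeping, namely checking that $S\mapsto\tf_{{}_S}$ is additive and order-preserving with order-preserving inverse and carries $\leq_{\mathrm{d}}$, $\ll$, $\perp$ to their operator counterparts, together with the elementary remark that the short $(\tf_{{}_A})_{{}_{\ker\tf_{{}_B}}}$ stays bounded so that it represents a genuine element of $\bph$. Once this dictionary is in place, Theorem~\ref{opdec} follows by a direct translation of Theorem~\ref{kerdec}.
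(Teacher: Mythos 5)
Your proposal is correct and follows exactly the paper's route: the paper also obtains Theorem~\ref{opdec} by transporting Theorem~\ref{kerdec} through the correspondence $S\mapsto\tf_{{}_S}$, defining $A_{{}_{\ll,B}}$ and $A_{{}_{{\perp},B}}$ as the operators representing $(\tf_{{}_A})_{{}_{\ker\tf_{{}_B}}}$ and $\tf_{{}_A}-(\tf_{{}_A})_{{}_{\ker\tf_{{}_B}}}$. The only difference is that you spell out the dictionary bookkeeping (boundedness of the short, the matching of $\ll$, $\perp$, $\leq_{\mathrm{d}}$) that the paper leaves implicit.
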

\begin{proof} Let $A_{{}_{\ll,B}}$ and $A_{{}_{{\perp},B}}$ be the operators corresponding to $(\tf_{{}_A})_{\ker\tf_B}$ and $\tf_{{}_A}-(\tf_{{}_A})_{\ker\tf_B}$, respectively.
\end{proof}
\begin{corollary} Let $B$ be a bounded positive operator with closed range. Then for every $A\in\bph$
\begin{align*}
A=A_{{}_{\ll,B}}+A_{{}_{{\perp},B}}.
\end{align*}
is the unique decomposition of $A$ into $B$-absolutely continuous and $B$-singular parts.
\end{corollary}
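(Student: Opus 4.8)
The plan is to deduce this corollary directly from Theorem~\ref{opdec}, whose dichotomy says that the decomposition $A = A_{{}_{\ll,B}} + A_{{}_{{\perp},B}}$ is unique precisely when $A_{{}_{\ll,B}} \leq_{\mathrm d} B$. So it suffices to show that the closedness of $\ran B$ forces $A_{{}_{\ll,B}} \leq_{\mathrm d} B$ for every $A \in \bph$. Recall from the construction in Theorem~\ref{opdec} that $A_{{}_{\ll,B}}$ is the positive operator whose form is $(\tf_{{}_A})_{\ker\tf_B}$, and $\ker\tf_B = \ker B$. Since $(\tf_{{}_A})_{\ker B} \leq \tf_{{}_A}$, it is enough to produce a single constant $c$ with $\tf_{{}_A}[x] \leq c\,\tf_{{}_B}[x]$ for all $x$ \emph{outside} $\ker B$ in a suitable quantitative sense; more precisely, we want $(\tf_{{}_A})_{\ker B}[x] \leq c\,\tf_{{}_B}[x]$ for all $x$, and the shorted form already vanishes on $\ker B$, which is exactly where $\tf_{{}_B}$ vanishes.

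First I would note that $\ran B$ closed is equivalent to $\ran B^{1/2}$ closed, and hence (since $\ker B^{1/2} = \ker B$) the restriction of $B^{1/2}$ to $(\ker B)^{\perp}$ has a bounded inverse; equivalently there is $m > 0$ with $\tf_{{}_B}[x] = \|B^{1/2}x\|^2 \geq m\,\|Px\|^2$, where $P$ is the orthogonal projection onto $(\ker B)^{\perp} = \overline{\ran B}$. Next I would compute $(\tf_{{}_A})_{\ker B}[x]$ using the Lemma's formula: $(\tf_{{}_A})_{\ker B}[x] = \inf_{y \in \ker B} \tf_{{}_A}[x - y]$. Taking $y = (I-P)x \in \ker B$ gives $(\tf_{{}_A})_{\ker B}[x] \leq \tf_{{}_A}[Px] = (APx \mid Px) \leq \|A\|\,\|Px\|^2$. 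Combining the two estimates, $(\tf_{{}_A})_{\ker B}[x] \leq \|A\|\,\|Px\|^2 \leq \tfrac{\|A\|}{m}\,\tf_{{}_B}[x]$ for every $x \in \Hh$, i.e. $A_{{}_{\ll,B}} \leq \tfrac{\|A\|}{m} B$, so $A_{{}_{\ll,B}} \leq_{\mathrm d} B$. By Theorem~\ref{opdec} the decomposition is therefore unique.

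The only mildly delicate point is the passage from ``$\ran B$ closed'' to the coercivity estimate $\tf_{{}_B}[x] \geq m\|Px\|^2$; this is the standard fact that a bounded positive operator with closed range is bounded below on the orthogonal complement of its kernel, which one gets from the open mapping theorem applied to $B^{1/2}\colon (\ker B)^\perp \to \ran B^{1/2}$ (closedness of $\ran B$ and $\ran B^{1/2}$ coincide because $\ran B \subseteq \ran B^{1/2}$ densely and a closed range for $B$ forces $\ran B = \ran B^{1/2} = \overline{\ran B}$). Everything else is routine. I would also remark, as the statement of the corollary already hints, that in this situation $A_{{}_{\ll,B}}$ is in fact $B$-closable, since domination by $B$ implies strong $B$-absolute continuity, and hence the short-type decomposition here coincides with the Lebesgue-type decomposition of Theorem~\ref{LDT}.
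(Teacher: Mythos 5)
Your proof is correct, and it reaches the same pivot as the paper --- namely that closedness of $\ran B$ forces $A_{{}_{\ll,B}}\leq_{\mathrm d}B$, after which Theorem~\ref{opdec} gives uniqueness --- but by a different route. The paper invokes Douglas's majorization--factorization--range-inclusion theorem to identify the set $\set{S\in\bph}{(S\leq A)\wedge(\ran S\subseteq\ran B)}$ with $\set{S\in\bph}{(S\leq A)\wedge(S\leq_{\mathrm d}B)}$, and then notes that $\ker B\subseteq\ker A_{{}_{\ll,B}}$ together with $\ran B=(\ker B)^{\perp}$ places $A_{{}_{\ll,B}}$ in the first set. You instead prove the domination directly: from the closed range you extract the coercivity bound $\sip{Bx}{x}\geq m\|Px\|^2$ on $(\ker B)^{\perp}$ via the open mapping theorem, and combine it with the elementary estimate $(\tf_{{}_A})_{\ker B}[x]\leq\tf_{{}_A}[Px]\leq\|A\|\,\|Px\|^2$ obtained by testing the infimum at $y=(I-P)x\in\ker B$. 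All the intermediate claims check out (in particular $\ran B$ closed does imply $\ran B^{1/2}=\ran B=(\ker B)^{\perp}$, and $B^{1/2}x=B^{1/2}Px$). Your argument is more self-contained and even yields the explicit constant $A_{{}_{\ll,B}}\leq\frac{\|A\|}{m}B$, whereas the paper's appeal to Douglas is shorter and connects the statement to the range-inclusion characterization of shorted operators used throughout Section~\ref{operators}; morally the two are close, since Douglas's theorem is itself a closed-graph argument. Your closing remark that domination implies strong absolute continuity, so that the short-type and Lebesgue-type decompositions coincide here, matches the observation the paper makes immediately after the corollary.
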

\begin{proof}
If $\ran B$ is closed, then the following two sets are identical according to the well-known theorem of Douglas \cite{douglas}
\begin{align*}
\big\{S\in\bph~\big|~(S\leq A)~\wedge~(\ran S\subseteq\ran B)\big\}=\big\{S\in\bph~\big|~(S\leq A)~\wedge~(S\leq_{\mathrm{d}} B)\big\}.
\end{align*}
Consequently, the uniqueness follows from Theorem \ref{opdec}. Since $\ran B$ is closed, the inclusion $\ker B\subseteq\ker A_{{}_{\ll,B}}$ implies that $\ran A_{{}_{\ll,B}}\subseteq\ran B$.
\end{proof}
Observe that if $\ran B$ is closed, then $A_{{}_{\ll,B}}$ coincides with $\mathbf{D}_BA$ in the sense of Ando \cite{andolebdec}, and therefore it is strongly absolutely continuous (or closable) with respect to $B$. Furthermore, according to \cite[Theorem 7]{tarcsayopdec} we have the following characterization of closed range positive operators.
\begin{corollary} Let $B$ be a bounded positive operator. Then the following are equivalent
\begin{itemize}
\item[$(i)$] $\ran B$ is closed,
\item[$(ii)$] $\forall A\in\bph:\quad A_{{}_{\ll,B}}\leq_{\mathrm{d}}B$,
\item[$(iii)$] $\forall A\in\bph:\quad \mathbf{D}_BA\leq_{\mathrm{d}}B$.
\end{itemize}
If any of $(i)-(iii)$ fulfills, then $\mathbf{D}_BA=A_{{}_{\ll,B}}$ for all $A\in\bph$.
\end{corollary}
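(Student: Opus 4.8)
The plan is to establish the cyclic implications $(i)\Rightarrow(ii)\Rightarrow(iii)\Rightarrow(i)$ and then the concluding equality. The fact that drives everything is already contained in the proof of Theorem~\ref{kerdec}: transcribed from forms to operators it says that $\mathbf{D}_BA\leq A_{{}_{\ll,B}}$ for all $A,B\in\bph$ (this is simply $\dwt\leq\tkerw$ applied to $\tf_{{}_A}$ and $\tf_{{}_B}$). In particular $(ii)\Rightarrow(iii)$ is immediate: if $A_{{}_{\ll,B}}\leq_{\mathrm{d}}B$ then $\mathbf{D}_BA\leq A_{{}_{\ll,B}}\leq_{\mathrm{d}}B$.

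For $(i)\Rightarrow(ii)$ I would argue directly. If $\ran B$ is closed then $0$ is isolated in, or absent from, $\sigma(B)$, so $B\geq\delta P$ for some $\delta>0$, where $P$ is the orthogonal projection onto $(\ker B)^{\perp}=\overline{\ran B}$. On the other hand, $\ker B\subseteq\ker A_{{}_{\ll,B}}$ together with $A_{{}_{\ll,B}}\leq A$ yields $A_{{}_{\ll,B}}=PA_{{}_{\ll,B}}P\leq\|A\|\,P$. Hence $A_{{}_{\ll,B}}\leq(\|A\|/\delta)\,B$, that is, $A_{{}_{\ll,B}}\leq_{\mathrm{d}}B$. (Alternatively, one may invoke Douglas's theorem exactly as in the corollary above.)

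The heart of the matter is $(iii)\Rightarrow(i)$, which I would prove by contraposition with the single explicit witness $A:=B^{1/2}\in\bph$. First one checks that $B^{1/2}$ is strongly $B$-absolutely continuous: if $B^{1/4}(x_n-x_m)\to0$ and $B^{1/2}x_n\to0$, then $(B^{1/4}x_n)$ is Cauchy, say $B^{1/4}x_n\to z$; applying the bounded operator $B^{1/4}$ gives $B^{1/4}z=\lim B^{1/2}x_n=0$, so $z\in\ker B^{1/4}\cap\overline{\ran B^{1/4}}=\{0\}$, whence $(B^{1/2}x_n\mid x_n)=\|B^{1/4}x_n\|^2\to0$. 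Since $\mathbf{D}_B$ extracts the largest strongly $B$-absolutely continuous minorant (Theorem~\ref{LDT}) and $B^{1/2}$ is such a minorant of itself, we get $\mathbf{D}_BB^{1/2}=B^{1/2}$. Now if $\ran B$ is not closed then $0$ is an accumulation point of $\sigma(B)$, so no constant $c$ satisfies $\sqrt{t}\leq c\,t$ throughout $\sigma(B)$; by the spectral theorem $B^{1/2}\not\leq_{\mathrm{d}}B$, and therefore $\mathbf{D}_BB^{1/2}=B^{1/2}$ violates $(iii)$. (This is the content of \cite[Theorem~7]{tarcsayopdec}, which could simply be cited.)

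Finally, under $(i)$ I would upgrade $\mathbf{D}_BA\leq A_{{}_{\ll,B}}$ to an equality. By $(ii)$, $A_{{}_{\ll,B}}\leq_{\mathrm{d}}B$, and domination trivially forces strong $B$-absolute continuity; since moreover $A_{{}_{\ll,B}}\leq A$, the extremality clause of Theorem~\ref{LDT}, applied to $\tf_{{}_A}$ and $\tf_{{}_B}$, gives $A_{{}_{\ll,B}}\leq\mathbf{D}_BA$, so the two coincide. The one point deserving care — and where I expect the main obstacle to lie — is the equivalence, used in both $(i)\Rightarrow(ii)$ and $(iii)\Rightarrow(i)$, between ``$\ran B$ is closed'' and the spectral condition that $0$ is not an accumulation point of $\sigma(B)$ (equivalently, $\sqrt{t}\leq c\,t$ on $\sigma(B)$ for some $c$); once that is granted, the rest is a routine assembly of the spectral theorem with results already recorded in the paper.
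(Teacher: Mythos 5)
Your proof is correct and complete; the difference from the paper is that the paper gives no argument at all here --- it simply refers the reader to \cite[Theorem~7]{tarcsayopdec} for the equivalence, having already noted $\mathbf{D}_BA=A_{{}_{\ll,B}}$ in the closed-range case via the preceding Douglas-theorem corollary. Your self-contained cycle $(i)\Rightarrow(ii)\Rightarrow(iii)\Rightarrow(i)$ checks out: the inequality $\mathbf{D}_BA\leq A_{{}_{\ll,B}}$ is indeed recorded inside the proof of Theorem~\ref{kerdec}, so $(ii)\Rightarrow(iii)$ is immediate; for $(i)\Rightarrow(ii)$ the estimate $A_{{}_{\ll,B}}=PA_{{}_{\ll,B}}P\leq\|A\|P\leq(\|A\|/\delta)B$ is valid because $\ker B\subseteq\ker A_{{}_{\ll,B}}$ kills the $(I-P)$ components of a positive operator on both sides, and closed range does give $B\geq\delta P$; the witness $A=B^{1/2}$ for $(iii)\Rightarrow(i)$ is exactly the right one, your verification that $B^{1/2}\ll_{\mathrm{s}}B$ (hence $\mathbf{D}_BB^{1/2}=B^{1/2}$ by the extremality clause of Theorem~\ref{LDT}) is sound, and the spectral fact that $\ran B$ is closed iff $0$ is not an accumulation point of $\sigma(B)$ iff $\sqrt{t}\leq c\,t$ on $\sigma(B)$ for some $c$ is standard for bounded positive operators. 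The concluding equality $\mathbf{D}_BA=A_{{}_{\ll,B}}$ via ``domination implies strong absolute continuity'' plus the extremality of $\mathbf{D}_BA$ matches what the paper obtains through its closed-range corollary. What your approach buys is a proof that does not outsource the characterization to \cite{tarcsayopdec}; what it costs is a little spectral-theoretic overhead ($(i)\Rightarrow(ii)$ could alternatively be run through Douglas's theorem, as the paper's previous corollary does, avoiding the explicit $\delta$).
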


\begin{corollary}
Let $A$ be a bounded positive operator. Then $A_{{}_{\ll,B}}$ is an extremal point of the operator segment
\begin{align*}
[0,A]:=\{S\in\bph~|~S\leq A\}
\end{align*}
for all $B\in\bph$.
\end{corollary}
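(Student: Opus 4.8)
The plan is to deduce this from the form-level extremality in Theorem~\ref{shortext} together with the order isomorphism $S\mapsto\tf_{{}_S}$ between $\bph$ and bounded nonnegative forms on $\mathscr{H}$. By the definition used in the proof of Theorem~\ref{opdec}, $A_{{}_{\ll,B}}$ is the unique bounded positive operator whose associated form is the short $(\tf_{{}_A})_{\ker\tf_B}$ of $\tf_{{}_A}$ to the subspace $\Yy:=\ker\tf_{{}_B}$. Hence, applying Theorem~\ref{shortext} with $\tf:=\tf_{{}_A}$ and this choice of $\Yy$, the form $(\tf_{{}_A})_{\ker\tf_B}$ is an extremal point of the convex set $\{\ssf\in\mathcal{F}_+(\mathscr{H})\mid 0\leq\ssf\leq\tf_{{}_A}\}$.

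Next I would check that this convex set of forms is exactly the image of the operator segment $[0,A]$ under $S\mapsto\tf_{{}_S}$. If $\ssf$ is a form with $\ssf\leq\tf_{{}_A}$, then $\ssf[x]\leq\tf_{{}_A}[x]\leq\|A\|\|x\|^{2}$ for every $x\in\mathscr{H}$, so $\ssf$ is bounded and, by the Riesz representation theorem, $\ssf=\tf_{{}_S}$ for a unique $S\in\bph$; moreover $\ssf\leq\tf_{{}_A}$ holds precisely when $S\leq A$. Thus $S\mapsto\tf_{{}_S}$ restricts to a bijection from $[0,A]$ onto $[0,\tf_{{}_A}]=\{\ssf\in\mathcal{F}_+(\mathscr{H})\mid 0\leq\ssf\leq\tf_{{}_A}\}$, and this bijection is the restriction of a linear map, hence affine.

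Finally, since an affine bijection between convex sets carries extremal points to extremal points, the preimage of the extremal point $(\tf_{{}_A})_{\ker\tf_B}$ under $S\mapsto\tf_{{}_S}$ is an extremal point of $[0,A]$; but this preimage is precisely $A_{{}_{\ll,B}}$, which is the assertion. I do not expect a real obstacle: the only step deserving a word of justification is that a form dominated by a bounded form is automatically bounded, which is what upgrades the correspondence between $[0,A]$ and $[0,\tf_{{}_A}]$ from an injection to a bijection, so that extremal points are neither created nor lost. As an alternative avoiding forms, one could argue directly that $A_{{}_{\ll,B}}$ is a disjoint part of $A$: if $0\leq S\leq A_{{}_{\ll,B}}$ and $0\leq S\leq A-A_{{}_{\ll,B}}$, then $\ker B\subseteq\ker(A_{{}_{\ll,B}}+S)$ and $A_{{}_{\ll,B}}+S\leq A$, so the maximality of $A_{{}_{\ll,B}}$ in Theorem~\ref{opdec} forces $S=0$; one then concludes by transferring the equivalence $(iii)\Leftrightarrow(iv)$ of Lemma~\ref{t-q-u char} through the same correspondence.
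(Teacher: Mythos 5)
Your proof is correct and follows exactly the route the paper intends: the corollary is stated without proof as an immediate consequence of Theorem~\ref{shortext} transferred through the affine order-isomorphism $S\mapsto\tf_{{}_S}$ between $[0,A]$ and $[0,\tf_{{}_A}]$. You rightly flag the one point worth checking (that any form dominated by $\tf_{{}_A}$ is bounded, so the correspondence is onto and extreme points are preserved), and your alternative disjoint-part argument is also sound.
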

 We remark that the short $A_{\M}$ of $A$ to the close linear subspace $\M$ of the  (complex) Hilbert space $\Hh$ possesses a factorization of the form 
 \begin{equation*}
    A_{\M}=A^{1/2}P_{\widetilde{\M}}A^{1/2},
 \end{equation*}
 where $P_{\widetilde{\M}}$ is defined to be the orthogonal projection onto the subspace $\widetilde{\M}:=A^{-1/2}\langle\M\rangle$, see Krein \cite{krein}. This factorization can hold, of course, only if the underlying space is complex. Below we offer an alternative factorization of the operator short that simultaneously treats the real and complex cases. In fact, we show that there exists a (real or complex, respectively) Hilbert space $\Hh_A$, associated with the positive operator $A$, such that $A_{\M}$ admits a factorization of the form $J_A(I-P)J_A^*$ where $J_A$ is the canonical continuous embedding of $\Hh_A$ into $\Hh$ and $P$ is the orthogonal projection onto an appropriately defined subspace of $\Hh_A$, associated with $\M$. The construction below is taken from \cite{Sebestyén93}.

Let us consider the range space $\ran A$, equipped with the inner product $\sipa{\cdot}{\cdot}$
\begin{equation*}
    \forall x,y\in\Hh:\qquad \sipa{Ax}{Ay}=\sip{Ax}{y}.
\end{equation*}
Note that the operator Schwarz inequality
\begin{equation*}
    \sip{Ax}{Ax}\leq\|A\|\sip{Ax}{x}
\end{equation*}
implies that $\sipa{\cdot}{\cdot}$ defines an inner product, indeed. Let $\Hh_A$ stand for the completion of that inner product space. Consider the canonical embedding operator of $\ran A\subseteq \Hh_A$ into $\Hh$, defined by
\begin{equation*}
    \forall x\in\Hh:\qquad J_A(Ax):=Ax.
\end{equation*}
Then $J_A$ is well defined and continuous due to the operator Schwarz inequality above (namely, by norm bound $\sqrt{\|A\|}$). This mapping has a unique norm preserving extension from $\Hh_A$ to $\Hh$ which is denoted by $J_A$ as well. An easy calculation shows that its adjoint $J_A^*$ acts as an operator from $\Hh$ to $\Hh_A$ possessing the canonical property
\begin{equation*}
    \forall x\in\Hh:\qquad J_A^*x=Ax.
\end{equation*}
This yields the following useful factorization for $A$:
\begin{equation*}
    A=J_AJ_A^*.
\end{equation*}
\begin{theorem}\label{factorthm}
    Let $\Hh$ be a Hilbert space and let $A\in\bph$. For a given subspace $\M\subseteq \Hh$ denote by $P$ the orthogonal projection of $\Hh_A$ onto   the closure of  $\{Ax\,|\,x\in\M\}$. Then the short of $A$ to $\M$ equals $J_A(I-P)J_A^*$.
\end{theorem}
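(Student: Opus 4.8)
The plan is to recognise that the two constructions appearing in the statement --- the auxiliary Hilbert space $\Hh_A$ with its embedding $J_A$, and the shorted form produced by the lemma introducing the short of a form --- describe the same object from two sides. Throughout write $\tf:=\tf_{{}_A}$ for the bounded form $(x,y)\mapsto\sip{Ax}{y}$; since $\tf_{{}_{\M}}\le\tf$ is again a bounded nonnegative form, the Riesz representation theorem provides a unique bounded positive operator $A_{\M}$ with $\sip{A_{\M}x}{x}=\tf_{{}_{\M}}[x]$ for every $x\in\Hh$, and this is the short of $A$ to $\M$ (the operator counterpart of the short of a form from Section~\ref{main theorem}). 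I shall use freely the facts recorded before the theorem: $J_A\colon\Hh_A\to\Hh$ is bounded, $J_A^*x=Ax$, and $A=J_AJ_A^*$.

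First I would identify $\Hh_A$ with $\Ht$. As $\tf[x]=\sip{Ax}{x}$ we have $\ker\tf=\ker A$, so $\Xx/_{\ker\tf}=\Hh/_{\ker A}$, and the natural inner product there satisfies $(x+\ker A\mid y+\ker A)_{\tf}=\sip{Ax}{y}=\sipa{Ax}{Ay}$. Hence $x+\ker A\mapsto Ax$ is a linear isometry from $\Hh/_{\ker A}$ onto the dense subspace $\ran A$ of $\Hh_A$, and it extends to a unitary $U\colon\Ht\to\Hh_A$ taking the canonical quotient map $x\mapsto x+\ker\tf$ to $x\mapsto Ax=J_A^*x$. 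Being a homeomorphism, $U$ also takes the closure in $\Ht$ of $\{y+\ker\tf\mid y\in\M\}$ onto the closure in $\Hh_A$ of $\{Ay\mid y\in\M\}=\{Ax\mid x\in\M\}$, so that $U$ conjugates the orthogonal projection $P_0$ of $\Ht$ onto the former subspace to the orthogonal projection $P$ of the statement, i.e. $U(I-P_0)U^*=I-P$.

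Finally I would transport the formula for the short obtained in the proof of that lemma, $\tf_{{}_{\M}}[x]=\bigl\|(I-P_0)(x+\ker\tf)\bigr\|_{\tf}^{2}$. Applying the unitary $U$ and then $J_A^*x=Ax$ gives, for every $x\in\Hh$,
\[
\tf_{{}_{\M}}[x]=\bigl\|(I-P)J_A^*x\bigr\|_{\Hh_A}^{2}=\sipa{(I-P)J_A^*x}{J_A^*x}=\sip{J_A(I-P)J_A^*x}{x},
\]
where the second equality uses that $I-P$ is a self-adjoint idempotent and the third is the defining relation of $J_A^*$. Thus the bounded positive operator $J_A(I-P)J_A^*$ has quadratic form $\tf_{{}_{\M}}$, whence $A_{\M}=J_A(I-P)J_A^*$ by polarization. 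I do not expect a substantial obstacle here --- the argument merely unwinds the definitions; the one point requiring care is the identification $U$ of $\Ht$ with $\Hh_A$ and, above all, the verification that it intertwines the orthogonal projection onto $\overline{\{y+\ker\tf\mid y\in\M\}}$ with the projection $P$ of the statement, after which the displayed computation closes the proof.
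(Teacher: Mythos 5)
Your proof is correct and follows essentially the same route as the paper's: both arguments reduce the claim to the observation that the quadratic form of $J_A(I-P)J_A^*$ at $x$ equals the squared distance in $\Hh_A$ from $Ax$ to $\overline{\{Ay\mid y\in\M\}}$, which is $\inf_{y\in\M}\sip{A(x-y)}{x-y}$; your explicit unitary $U\colon\Ht\to\Hh_A$ just makes the identification with the form-short of Section~\ref{main theorem} visible instead of recomputing the infimum directly. The only discrepancy is one of labelling, inherited from the paper itself: the paper writes the resulting form as $\tf_{\M^{\perp}}$ (its convention for the operator short to $\M$), while you call it $\tf_{\M}$, but the quantity $\inf_{y\in\M}\tf[x-y]$ obtained is the same in both proofs.
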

\begin{proof}
    It is enough to show that the quadratic forms of $J_A(I-P)J_A^*$ and $\tf_{\M^{\perp}}$ are equal. To verify this let $x\in\Hh$. Then
    \begin{align*}
        \sip{J_A(I-P)J_A^*x}{x}&=\sipa{(I-P)Ax}{(I-P)Ax}=\dist^2(Ax;\ran P)\\
        &=\inf_{y\in\M} \sipa{Ax-Ay}{Ax-Ay}=\inf_{y\in\M} \sip{A(x-y)}{x-y}\\
        &=\tf_{\M^{\perp}}[x],
    \end{align*}
    as it is claimed.
\end{proof}

The above construction yields another formula for the quadratic form of the shorted operator:
\begin{corollary}
    Let $\Hh$ be a Hilbert space, $A\in\bph$ and $\M\subseteq\Hh$ any closed linear subspace. Then for any $x\in\Hh$
    \begin{equation*}
     \sip{J_A(I-P)J_A^*x}{x}=\sip{Ax}{x}-\sup\{\abs{\sip{Ax}{y}}^2\,|\,y\in\M, \sip{Ay}{y}\leq1\}.
    \end{equation*}
\end{corollary}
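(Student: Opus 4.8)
The plan is to start from the factorization established in Theorem~\ref{factorthm}, which gives $\sip{J_A(I-P)J_A^*x}{x}=\sip{Ax}{x}-\sip{PAx}{Ax}_{\!_A}$, where $P$ is the orthogonal projection of $\Hh_A$ onto the closure of $\{Ay\,|\,y\in\M\}$. Thus the whole task reduces to identifying the quantity $\sip{PAx}{Ax}_{\!_A}=\dist^2(Ax,\ran P^\perp)^{\mathrm{c}}$, or rather $\|PAx\|_{\!_A}^2$, with the claimed supremum $\sup\{\abs{\sip{Ax}{y}}^2\,|\,y\in\M,\sip{Ay}{y}\leq1\}$. First I would note that since $P$ projects onto the $\|\cdot\|_{\!_A}$-closure of the subspace $\{Ay\,|\,y\in\M\}$, the squared norm $\|PAx\|_{\!_A}^2$ equals $\sup\{\abs{\sipa{Ax}{z}}^2\,|\,z\in\overline{\{Ay\,|\,y\in\M\}},\ \|z\|_{\!_A}\leq1\}$ by the standard description of the norm of a projection onto a closed subspace (the supremum of $\abs{\sipa{\cdot}{z}}^2$ over the unit ball of that subspace). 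Because the functional $z\mapsto\sipa{Ax}{z}$ is $\|\cdot\|_{\!_A}$-continuous, the supremum over the closed subspace equals the supremum over the dense subset $\{Ay\,|\,y\in\M\}$ itself, so we may replace $z$ by $Ay$ with $y\in\M$.

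Next I would translate the constraint and the objective back into $\Hh$-language via the two canonical identities $\sipa{Ay}{Ay}=\sip{Ay}{y}$ and $\sipa{Ax}{Ay}=\sip{Ax}{y}$ from the construction of $\Hh_A$. The constraint $\|Ay\|_{\!_A}\leq1$ becomes $\sip{Ay}{y}\leq1$, and the quantity being maximized, $\abs{\sipa{Ax}{Ay}}^2$, becomes $\abs{\sip{Ax}{y}}^2$. Hence $\|PAx\|_{\!_A}^2=\sup\{\abs{\sip{Ax}{y}}^2\,|\,y\in\M,\ \sip{Ay}{y}\leq1\}$, and substituting this into the displayed identity from Theorem~\ref{factorthm} gives precisely the claim. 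One small point to handle cleanly is the case $Ax\notin\ran A$ versus $Ax\in\overline{\ran A}$ inside $\Hh_A$: since $\ran A$ is dense in $\Hh_A$ and the embedding is the identity on $\ran A$, every element of $\Hh$ of the form $Ax$ genuinely sits in $\Hh_A$ and the pairings above are literal, so no approximation of $Ax$ itself is needed; only $z$ (or $Ay$) ranges over a dense subset of the target subspace.

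The only mildly delicate step is justifying that the norm of the orthogonal projection onto the closure of a (not necessarily closed) subspace $\mathcal S\subseteq\Hh_A$ satisfies $\|P\xi\|^2=\sup\{\abs{\sipa{\xi}{z}}^2\,|\,z\in\mathcal S,\ \|z\|_{\!_A}\leq1\}$ — i.e.\ that one may take the supremum over $\mathcal S$ rather than its closure. This is routine: $P\xi$ lies in $\overline{\mathcal S}$, so $\|P\xi\|=\sup\{\abs{\sipa{P\xi}{w}}\,|\,w\in\overline{\mathcal S},\|w\|\leq1\}=\sup\{\abs{\sipa{\xi}{w}}\,|\,w\in\overline{\mathcal S},\|w\|\leq1\}$ using $\sipa{P\xi}{w}=\sipa{\xi}{w}$ for $w\in\overline{\mathcal S}$, and continuity of $w\mapsto\sipa{\xi}{w}$ together with density lets us shrink to $\mathcal S$. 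I expect no real obstacle; the corollary is essentially a dictionary translation of Theorem~\ref{factorthm} combined with this elementary fact about projections. I would keep the write-up to a few lines, citing the two canonical identities and the displayed computation in the proof of Theorem~\ref{factorthm}.
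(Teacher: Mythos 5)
Your argument is correct and is essentially the paper's own proof: both start from Theorem~\ref{factorthm} to reduce the claim to computing $\|P(Ax)\|_{\!_A}^2$, identify that quantity as the supremum of $\abs{\sipa{Ax}{Ay}}^2$ over the $\|\cdot\|_{\!_A}$-unit ball of $\{Ay\,|\,y\in\M\}$, and translate back via $\sipa{Ax}{Ay}=\sip{Ax}{y}$ and $\sipa{Ay}{Ay}=\sip{Ay}{y}$. The only difference is that you spell out the (routine) density argument allowing the supremum to be taken over $\{Ay\,|\,y\in\M\}$ rather than its closure, which the paper leaves implicit.
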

\begin{proof}
    For $x\in\Hh$ we have
    \begin{align*}
        \sip{J_A(I-P)J_A^*x}{x}&=\sipa{Ax}{Ax}-\sipa{P(Ax)}{P(Ax)}\\
        &=\sip{Ax}{x}-\sup\{\abs{\sipa{Ax}{Ay}}^2\,|\, y\in\M, \sipa{Ay}{Ay}\leq1\}\\
        &=\sip{Ax}{x}-\sup\{\abs{\sip{Ax}{y}}^2\,|\,y\in\M, \sip{Ay}{y}\leq1\},
    \end{align*}
    indeed.
\end{proof}
\begin{corollary}
    If $A$ and $B$ are bounded positive operators on the Hilbert space $\Hh$ then the quadratic forms of $A_{\ll,B}$ and $A_{\perp,B}$ can be calculated by the following formulae:
    \begin{align*}
    \sip{A_{\ll,B}x}{x}&=\inf_{y\in\ker B} \sip{A(x-y)}{x-y},\\
        \sip{A_{\perp,B}x}{x}&=\sup\{\abs{\sip{Ax}{y}}^2\,|\,y\in\ker B, \sip{Ay}{y}\leq1\}.
    \end{align*}
\end{corollary}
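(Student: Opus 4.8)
The plan is to read off both identities from the description of $A_{\ll,B}$ supplied by Theorem \ref{opdec}, together with the embedding apparatus around $\Hh_A$ developed above.

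The first formula requires essentially no work. By the proof of Theorem \ref{opdec}, $A_{\ll,B}$ is the bounded positive operator associated with the form $(\tf_{{}_A})_{\ker\tf_B}$; and since $B\geq 0$ we have $\tf_B[x]=\sip{Bx}{x}=\|B^{1/2}x\|^{2}$, whence $\ker\tf_B=\ker B^{1/2}=\ker B$. Thus the defining formula for the short of a form (the opening Lemma of Section \ref{main theorem}) gives at once
\[
\sip{A_{\ll,B}x}{x}=(\tf_{{}_A})_{\ker B}[x]=\inf_{y\in\ker B}\tf_{{}_A}[x-y]=\inf_{y\in\ker B}\sip{A(x-y)}{x-y}.
\]

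For the second formula I would invoke the splitting $A_{\perp,B}=A-A_{\ll,B}$, which reduces the task to rewriting $\sip{A_{\ll,B}x}{x}$ in the form occurring in the Corollary preceding this statement. Let $P$ be the orthogonal projection of $\Hh_A$ onto $\overline{\set{Ay}{y\in\ker B}}$. Imitating the computation in the proof of Theorem \ref{factorthm}, one has for every $x\in\Hh$
\[
\sip{J_A(I-P)J_A^{*}x}{x}=\|(I-P)(Ax)\|_{A}^{2}=\dist^{2}(Ax;\ran P)=\inf_{y\in\ker B}\|Ax-Ay\|_{A}^{2}=\inf_{y\in\ker B}\sip{A(x-y)}{x-y},
\]
the distance being that of $\Hh_A$; by the first part the right-hand side equals $\sip{A_{\ll,B}x}{x}$, so, a bounded positive operator being determined by its quadratic form, $A_{\ll,B}=J_A(I-P)J_A^{*}$. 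Applying the Corollary preceding this statement with the closed subspace $\M:=\ker B$ now yields
\[
\sip{A_{\ll,B}x}{x}=\sip{Ax}{x}-\sup\set{\abs{\sip{Ax}{y}}^{2}}{y\in\ker B,\ \sip{Ay}{y}\leq 1},
\]
and subtracting this identity from $\sip{Ax}{x}=\sip{A_{\ll,B}x}{x}+\sip{A_{\perp,B}x}{x}$ delivers the asserted expression for $\sip{A_{\perp,B}x}{x}$.

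I do not anticipate any genuine obstacle: the statement is in substance a bookkeeping corollary of Theorem \ref{opdec}, Theorem \ref{factorthm}, and the Corollary preceding it. The only step calling for a word of care lies inside that cited Corollary, where the supremum over the $\|\cdot\|_A$-unit ball of $\overline{\set{Ay}{y\in\ker B}}$ is traded for the supremum over $\set{y\in\ker B}{\sip{Ay}{y}\leq 1}$; this is licit because $\set{Ay}{y\in\ker B}$ is dense in that closed subspace, $\ker B$ is a linear subspace (so its elements may be rescaled to lie in the unit ball), and the functional $z\mapsto\sipa{Ax}{z}$ is $\|\cdot\|_A$-continuous. Everything else is direct substitution.
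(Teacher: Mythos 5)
Your argument is correct and follows essentially the same route as the paper: the first formula is read off from the identification of $A_{\ll,B}$ with the form short $(\tf_{{}_A})_{\ker\tf_B}$ (noting $\ker\tf_B=\ker B$), and the second is obtained by writing $A_{\ll,B}=J_A(I-P)J_A^*$ via Theorem \ref{factorthm} and subtracting the expression from the preceding corollary applied to $\M=\ker B$. Your closing remark about replacing the supremum over the unit ball of $\overline{\set{Ay}{y\in\ker B}}$ by the supremum over $\set{y\in\ker B}{\sip{Ay}{y}\le 1}$ is a legitimate point of care, but it concerns the already-proved preceding corollary rather than a gap in your own argument.
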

\begin{proof}
    Since $A_{\ll,B}$ is nothing but the short of $A$ to the closed subspace $\ker B^{\perp}$, Theorem \ref{factorthm} together with the above corollary implies the desired formulae.
\end{proof}

\section{Additive set functions}\label{set functions}
In this section we apply our main theorem for finitely additive nonnegative set functions. Our main reference is \cite{stt}. We recall first some definitions.

Let $T$ be a non-empty set, and let $\alg$ be a ring of some subsets of $T$. Let $\mu$ and $\nu$ be (finitely) additive nonnegative set functions (or charges, for short) on $\alg$. We say that $\nu$ is \emph{strongly absolutely continuous} with respect to $\mu$ (in symbols $\nu\ll_{\mathrm{s}}\mu$) if for any $\varepsilon>0$ there exists $\delta>0$ such that $\mu(R)<\delta$ implies
$\nu(R)<\varepsilon$ for all $R\in\alg$. It is important to remark that this notion is referred to as absolutely continuity in \cite{stt}. We say that the charge $\nu$ is \emph{absolutely continuous} with respect to $\mu$ if $\mu(R)=0$ implies $\nu(R)=0$ for all $R\in\alg$. Finally $\nu$ and $\mu$ are singular if the only charge which is dominated by both $\nu$ and $\mu$ is the zero charge.\\

Let $\Ee$ be the complex vector space of $\alg$-step functions, and define the associated form $\nform$ as follows:
\begin{equation*}\label{assoc_forms}
\forall\ffi,\psi\in\Ee:\quad\nform(\ffi,\psi):=\int\limits_T \ffi\cdot\overline{\psi}~\mathrm{d}\nu.
\end{equation*}
It was proved in \cite[Theorem 3.2]{stt} that if $\mu$ and $\nu$ are bounded charges, then $\nu$ is strongly absolutely continuous with respect to $\mu$ if and only if $\nform$ is strongly absolutely continuous with respect to $\mform$. Similarly, $\nu$ and $\mu$ are singular precisely when $\nform$ and $\mform$ are singular.

Using this result, the authors proved the classical Lebesgue-Darst decomposition theorem. Namely, if $\mu$ and $\nu$ are bounded charges then the formula
\begin{align*}
\nu_a:\alg\mapsto\mathbf{D}_{\tf_{\mu}}\tf_{\nu}[\chi_R]
\end{align*}
defines a charge on $\alg$, such that $\nu_a\ll_{\mathrm{s}}\mu$ and $(\nu-\nu_a)\perp\mu$. We use this argument below to provide a $(\ll,\perp)$-type decomposition. The following lemma (see \cite[Lemma 3.3]{stt}) plays an essential role in the proof and may be very useful in deciding the additivity of the correspondence $R\mapsto\tf[\chi_R]$ for a given form $\tf$.

\begin{lemma}
Let $T$ be a non-empty set, and let $\alg$ be a ring of subsets of $T$. For a given form $\tform$ on $\Ee$ the following statements are equivalent:
\begin{enumerate}[\upshape (i)]
  \item The set function $\theta:\alg\to\mathbb{R}$ defined by $\theta(R):=\tf[\chi_R]$ is additive;
  \item $\tform[\zeta]=\tform[|\zeta|]$ for all $\zeta\in\Ee$.
\end{enumerate}
\end{lemma}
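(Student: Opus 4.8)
The plan is to prove the two implications separately; both hinge on how $\tf$ pairs indicators of disjoint sets, so I would record that first. If $R,S\in\alg$ are disjoint, then $|\chi_R-\chi_S|=\chi_R+\chi_S=\chi_{R\cup S}$, and expanding the quadratic form by sesquilinearity gives
\begin{equation*}
\tf[\chi_R\pm\chi_S]=\tf[\chi_R]+\tf[\chi_S]\pm 2\operatorname{Re}\tf(\chi_R,\chi_S).
\end{equation*}
For $(ii)\Rightarrow(i)$: under $(ii)$ we have $\tf[\chi_R-\chi_S]=\tf\bigl[|\chi_R-\chi_S|\bigr]=\tf[\chi_R+\chi_S]$, so the display forces $\operatorname{Re}\tf(\chi_R,\chi_S)=0$ and hence $\theta(R\cup S)=\tf[\chi_R+\chi_S]=\theta(R)+\theta(S)$; a trivial induction over finite disjoint unions then yields additivity of $\theta$. (Testing in addition with $|\chi_R-i\chi_S|=\chi_R+\chi_S$ disposes of the imaginary part as well, so that under $(ii)$ the indicators of disjoint sets are actually mutually $\tf$-orthogonal.)

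For $(i)\Rightarrow(ii)$, fix $\zeta\in\Ee$. Since $\alg$ is a ring, the finitely many sets appearing in a representation of $\zeta$ can be refined into a pairwise disjoint family in $\alg$, so that $\zeta=\sum_{k=1}^{n}c_k\chi_{R_k}$ with $R_1,\dots,R_n\in\alg$ pairwise disjoint; then $|\zeta|=\sum_{k=1}^{n}|c_k|\chi_{R_k}$ is of the same form. Applying the computation above to each pair $R_j,R_k$ with $j\ne k$, additivity of $\theta$ gives $\tf(\chi_{R_j},\chi_{R_k})=0$, whence expanding the quadratic form yields
\begin{equation*}
\tf[\zeta]=\sum_{k=1}^{n}|c_k|^2\,\tf[\chi_{R_k}]=\tf\bigl[|\zeta|\bigr],
\end{equation*}
since the middle expression involves $\zeta$ only through the moduli of its coefficients, i.e.\ only through $|\zeta|$.

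The substantive content is the $\tf$-orthogonality of indicators of disjoint members of $\alg$; everything else — the two expansions, the passage from a step function to a pairwise disjoint representation (precisely the spot where the ring structure of $\alg$ is used, as the text preceding the lemma anticipates), and the induction — is routine bookkeeping. The one point I would be careful about is the passage from additivity of $\theta$ to the full relation $\tf(\chi_R,\chi_S)=0$ for disjoint $R,S$, since the expansion above exhibits only $\operatorname{Re}\tf(\chi_R,\chi_S)$ directly, the remaining part being handled by the companion $i$-test noted in the first paragraph.
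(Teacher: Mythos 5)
The paper itself gives no proof of this lemma (it is imported verbatim from \cite[Lemma 3.3]{stt}), so I can only judge your argument on its own terms. Your proof of $(ii)\Rightarrow(i)$ is correct: for disjoint $R,S\in\alg$ the identity $|\chi_R-\chi_S|=\chi_R+\chi_S=\chi_{R\cup S}$ together with $(ii)$ forces $\operatorname{Re}\tf(\chi_R,\chi_S)=0$, whence $\theta(R\cup S)=\theta(R)+\theta(S)$; this is also the only direction the paper actually uses (in the proof of Theorem \ref{measure-decomp} one verifies $(ii)$ and concludes additivity). The direction $(i)\Rightarrow(ii)$, however, has a genuine gap at exactly the spot you flag and then wave away. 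Additivity of $\theta$, applied to the disjoint pair $R_j,R_k$, yields only $\operatorname{Re}\tf(\chi_{R_j},\chi_{R_k})=0$, and the ``companion $i$-test'' you invoke to dispose of the imaginary part is itself an application of hypothesis $(ii)$ (it needs $\tf[\chi_R-i\chi_S]=\tf\bigl[|\chi_R-i\chi_S|\bigr]$), which is precisely what you are trying to prove; the argument is circular there. The gap is not cosmetic: writing $\zeta=\sum_k c_k\chi_{R_k}$ over pairwise disjoint sets and expanding, one finds
\begin{equation*}
\tf[\zeta]-\tf\bigl[|\zeta|\bigr]=-2\sum_{j<k}\operatorname{Im}\bigl(\tf(\chi_{R_j},\chi_{R_k})\bigr)\,\operatorname{Im}(c_j\overline{c_k}),
\end{equation*}
so the conclusion requires the imaginary parts of the cross terms to vanish, and additivity alone does not provide this.

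Indeed the implication cannot be rescued by any rearrangement of the same ingredients. Take $T=\{1,2\}$, $\alg=2^{T}$, so that $\Ee=\mathbb{C}^2$, and let $\tf(x,y)=\sum_{j,k}\overline{y_j}M_{jk}x_k$ with the positive definite Hermitian matrix
\begin{equation*}
M=\begin{pmatrix}1 & i/2\\ -i/2 & 1\end{pmatrix}.
\end{equation*}
Then $\theta(\{1\})=\theta(\{2\})=1$ and $\theta(\{1,2\})=2$, so $\theta$ is additive, yet for $\zeta=\chi_{\{1\}}+i\chi_{\{2\}}$ one computes $\tf[\zeta]=1$ while $\tf\bigl[|\zeta|\bigr]=\tf[\chi_{\{1\}}+\chi_{\{2\}}]=2$. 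Your computation does go through whenever $\zeta$ has real coefficients (the offending terms then vanish), so $(i)\Rightarrow(ii)$ holds on the real span of the characteristic functions, or under the additional hypothesis that $\tf(\chi_R,\chi_S)$ is real for all $R,S\in\alg$ --- a condition satisfied by the forms $\tf_\nu$ arising in Section \ref{set functions}. As written, though, you should either confine your proof to $(ii)\Rightarrow(i)$, which is all the application needs, or make such a reality assumption explicit before asserting the converse.
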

The main result of this section is the following short-type decomposition of charges. Here we emphasize that, in contrast to the Lebesgue-Darst decomposition, this decomposition holds for not necessarily bounded charges as well.
\begin{theorem}\label{measure-decomp}
Let $\alg$ be a ring of subsets of a non-empty set $T$, and let $\mu$ and $\nu$ be charges on $\alg$. Then there is a decomposition
\begin{align*}\nu=\nu_{{}_{\ll,\mu}}+\nu_{{}_{\perp,\mu}},
\end{align*}
where $\nu_{{}_{\ll,\mu}}\ll\mu$ and $\nu_{{}_{\perp,\mu}}\perp\mu$. Furthermore, if $\theta$ is a charge such that $\theta\leq\nu$ and $\theta\ll\mu$, then $\theta\leq\nu_{{}_{\ll,\mu}}$.
\end{theorem}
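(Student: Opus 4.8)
The plan is to reduce the statement about charges to the already-proven short-type decomposition for forms (Theorem \ref{kerdec}), applied to the associated forms $\mform$ and $\nform$ on the space $\Ee$ of $\alg$-step functions. First I would form $\nform_{{}_{\ker\mform}}$, the short of $\nform$ to the subspace $\ker\mform$, and set
\begin{align*}
\nu_{{}_{\ll,\mu}}(R):=\nform_{{}_{\ker\mform}}[\chi_R],\qquad \nu_{{}_{\perp,\mu}}(R):=\bigl(\nform-\nform_{{}_{\ker\mform}}\bigr)[\chi_R]
\end{align*}
for $R\in\alg$. The nontrivial point is to check that these two set functions are genuinely additive charges. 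For this I would invoke the Lemma immediately preceding the theorem: it suffices to show that $\nform_{{}_{\ker\mform}}[\zeta]=\nform_{{}_{\ker\mform}}[\,|\zeta|\,]$ for every $\zeta\in\Ee$, and likewise for $\nform-\nform_{{}_{\ker\mform}}$. Since $\nform$ itself satisfies $\nform[\zeta]=\nform[\,|\zeta|\,]$ (because $R\mapsto\nu(R)=\nform[\chi_R]$ is additive, being a charge), the job is to propagate this ``modulus invariance'' through the shortening operation.

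The key observation I would use is that $\ker\mform$ is invariant under the operations $\zeta\mapsto|\zeta|$ and, more usefully, under multiplication by step functions of modulus one (``sign'' functions): if $\mform[\eta]=\int_T|\eta|^2\,\mathrm d\mu=0$ and $u$ is an $\alg$-step function with $|u|\equiv 1$ on $\supp\eta$, then $\mform[u\eta]=0$ as well. Given $\zeta\in\Ee$, choose such a $u$ with $u\zeta=|\zeta|$ (take $u=\overline{\zeta}/|\zeta|$ where $\zeta\ne 0$ and $u=1$ elsewhere; this is a step function). Then for any $y\in\ker\mform$ we have $uy\in\ker\mform$ and
\begin{align*}
\nform[\,|\zeta|-uy\,]=\nform[\,u(\zeta-y)\,]=\int_T|u|^2|\zeta-y|^2\,\mathrm d\nu=\int_T|\zeta-y|^2\,\mathrm d\nu=\nform[\zeta-y],
\end{align*}
so taking the infimum over $y\in\ker\mform$ (and noting $y\mapsto uy$ is a bijection of $\ker\mform$ onto itself) gives $\nform_{{}_{\ker\mform}}[\,|\zeta|\,]=\nform_{{}_{\ker\mform}}[\zeta]$. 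Hence $R\mapsto\nu_{{}_{\ll,\mu}}(R)$ is additive by the Lemma; additivity of $\nu_{{}_{\perp,\mu}}=\nu-\nu_{{}_{\ll,\mu}}$ then follows by subtraction (both $\nu$ and $\nu_{{}_{\ll,\mu}}$ are additive), and nonnegativity is inherited from $\nform_{{}_{\ker\mform}}\le\nform$ and the definition of the short.

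It then remains to transfer the three properties. Absolute continuity $\nu_{{}_{\ll,\mu}}\ll\mu$: if $\mu(R)=0$ then $\mform[\chi_R]=0$, so $\chi_R\in\ker\mform\subseteq\ker\nform_{{}_{\ker\mform}}$ by the Lemma in Section \ref{main theorem}, whence $\nu_{{}_{\ll,\mu}}(R)=0$. Singularity $\nu_{{}_{\perp,\mu}}\perp\mu$: if $\theta$ is a charge with $\theta\le\nu_{{}_{\perp,\mu}}$ and $\theta\le\mu$, then passing to associated forms gives $\tf_\theta\le\nform-\nform_{{}_{\ker\mform}}$ and $\tf_\theta\le\mform$; by the argument already used in the proof of Theorem \ref{kerdec} (the form $\nform_{{}_{\ker\mform}}+\tf_\theta$ still lies below $\nform$ and still vanishes on $\ker\mform$, so maximality forces $\tf_\theta=0$), hence $\theta=0$. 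Maximality: if $\theta$ is a charge with $\theta\le\nu$ and $\theta\ll\mu$, then $\tf_\theta\le\nform$ and $\ker\mform\subseteq\ker\tf_\theta$, so by the maximality clause of Theorem \ref{kerdec} we get $\tf_\theta\le\nform_{{}_{\ker\mform}}$, i.e. $\theta\le\nu_{{}_{\ll,\mu}}$. The main obstacle in all of this is the additivity verification: the short is defined by an infimum and it is not a priori clear that it respects the modulus; the sign-function trick above is what makes it work, and it relies on $\ker\mform$ being stable under multiplication by bounded step functions — equivalently, on $\ker\mform$ being an ideal in $\Ee$ — which is exactly the structure exploited in the Remark following Theorem \ref{kerdec}.
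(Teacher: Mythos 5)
Your proposal is correct and follows essentially the same route as the paper: the same definition of $\nu_{{}_{\ll,\mu}}$ via the shorted form, the same reduction of additivity to the identity $\nform_{{}_{\ker\mform}}[\zeta]=\nform_{{}_{\ker\mform}}[\,|\zeta|\,]$ through the cited lemma, and the same unimodular-multiplier trick (your $u$ is the paper's $\psi$, which the paper likewise notes need not lie in $\Ee$ even though multiplication by it preserves $\Ee$ and $\ker\mform$). The transfer of absolute continuity, singularity, and maximality back to charges also matches the paper's argument.
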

\begin{proof} Let us define the set function $\nu_{{}_{\ll,\mu}}$ by
\begin{align*}
\forall R\in\alg:\quad\nu_{{}_{\ll,\mu}}(R):=(\tf_{\nu})_{\ker\tf_{\mu}}[\chi_R].
\end{align*}
It is clear that $\mu(R)=0$ implies $\nu_{{}_{\ll,\mu}}(R)=0$. Our only claim is therefore to prove the additivity of $\nu_{{}_{\ll,\mu}}$. For this purpose, let $\ffi\in\Ee$. In accordance with the previous lemma, it is enough to show that
\begin{align*}
(\tf_{\nu})_{\ker\tf_{\mu}}[|\ffi|]=(\tf_{\nu})_{\ker\tf_{\mu}}[\ffi].
\end{align*}
Assume that
\begin{align*}
\ffi=\sum\limits_{i=1}^k\lambda_i\cdot\chi_{R_i},
\end{align*}
where $\{\lambda_i\}_{i=1}^k$ are non-zero complex numbers and $\{R_i\}_{i=1}^k$ are pairwise disjoint elements of $\alg$. Define the function $\psi$ as follows
\begin{align*}
\psi:=\sum\limits_{i=1}^k\frac{|\lambda_i|}{\lambda_i}\cdot\chi_{{}_{R_k}}+\chi_{{}_{T\setminus \bigcup_{i=1}^kR_i}}.
\end{align*}
Since $|\psi(t)|=1$ for all $t\in T$, the multiplication with $\psi$ is a bijection on $\Ee$. (Note that $\psi\notin\Ee$ in general.) As $\tf_{\nu}[\zeta]=\tf_{\nu}[|\zeta|]$ for all $\zeta\in\Ee$, we have that
\begin{align*}
\begin{split}
(\tf_{\nu})_{\ker\tf_{\mu}}[\ffi]&=\inf\limits_{\xi\in\Ee}\tf_{\nu}[\ffi-\xi]=\inf\limits_{\xi\in\Ee}\tf_{\nu}[|\ffi-\xi|]\\
&=\inf\limits_{\xi\in\Ee}\tf_{\nu}[|\psi|\cdot|\ffi-\xi|]=\inf\limits_{\xi\in\Ee}\tf_{\nu}[||\ffi|-\psi\cdot\xi|]\\
&=\inf\limits_{\xi\in\Ee}\tf_{\nu}[|\ffi|-\psi\cdot\xi]=(\tf_{\nu})_{\ker\tf_{\mu}}[|\ffi|].
\end{split}
\end{align*}
Consequently, $\nu_{{}_{\ll,\mu}}$ is a charge, which is absolutely continuous with respect to $\mu$. Since $\nu$ and $\nu_{{}_{\ll,\mu}}$ are charges, $\nu_{{}_{\perp,\mu}}:=\nu-\nu_{{}_{\ll,\mu}}$ is a charge too, which is derived from $\tf_{\nu}-(\tf_{\nu})_{\ker\tf_{\mu}}$. Hence, $\nu_{{}_{\perp,\mu}}$ and $\mu$ are singular.
\end{proof}
The following corollary is an immediate consequence of Theorem \ref{shortext}.
\begin{corollary} Let $\nu$ and $\mu$ be a charges on $\Rr$. Then $\nu_{{}_{\ll,\mu}}$ is an extremal point of the convex set of those charges that are majorized by $\nu$.
\end{corollary}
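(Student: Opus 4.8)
The plan is to read the statement through the dictionary between charges on $\Rr$ and forms on $\Ee$ afforded by the assignment $\theta\mapsto\tf_{\theta}$, and then to invoke Theorem~\ref{shortext}. First I would collect the elementary properties of this dictionary. From the integral formula $\tf_{\theta}[\varphi]=\int_{T}|\varphi|^{2}\,\mathrm{d}\theta$ one reads off at once that $\theta\mapsto\tf_{\theta}$ is affine and order preserving, that $\theta\leq\theta'$ as charges holds precisely when $\tf_{\theta}\leq\tf_{\theta'}$ as forms (the converse implication follows by evaluating on characteristic functions), and hence that the map is injective, since $\theta(R)=\tf_{\theta}[\chi_{R}]$ for every $R\in\Rr$. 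Consequently, writing $\mathcal{C}$ for the convex set of charges majorized by $\nu$ and $\mathcal{M}:=\{\tf_{\theta}\mid\theta\in\mathcal{C}\}$ for its image, the map $\theta\mapsto\tf_{\theta}$ restricts to an affine bijection of $\mathcal{C}$ onto $\mathcal{M}$, and $\mathcal{M}$ is a convex subset of the form-segment $\{\ssf\in\mathcal{F}_{+}(\Ee)\mid 0\leq\ssf\leq\tf_{\nu}\}$.

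The step that genuinely requires work is the identification
\begin{align*}
\tf_{\nu_{{}_{\ll,\mu}}}=(\tf_{\nu})_{\ker\tf_{\mu}}.
\end{align*}
By the very definition of $\nu_{{}_{\ll,\mu}}$ the two quadratic forms coincide on every $\chi_{R}$; to promote this to equality of forms I would show that the shorted form $(\tf_{\nu})_{\ker\tf_{\mu}}$ is ``diagonal'' on step functions, i.e. that $(\tf_{\nu})_{\ker\tf_{\mu}}(\chi_{R_{1}},\chi_{R_{2}})=0$ whenever $R_{1},R_{2}\in\Rr$ are disjoint. Granting this, expanding an arbitrary $\varphi=\sum_{i}\lambda_{i}\chi_{R_{i}}$ over pairwise disjoint sets yields $(\tf_{\nu})_{\ker\tf_{\mu}}[\varphi]=\sum_{i}|\lambda_{i}|^{2}\,\nu_{{}_{\ll,\mu}}(R_{i})=\tf_{\nu_{{}_{\ll,\mu}}}[\varphi]$, and the Jordan--von Neumann formula turns this agreement of quadratic forms into the desired identity of forms. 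The orthogonality itself I would extract from the identity $(\tf_{\nu})_{\ker\tf_{\mu}}[\zeta]=(\tf_{\nu})_{\ker\tf_{\mu}}[|\zeta|]$ established in the proof of Theorem~\ref{measure-decomp}: applying it to $\zeta=\chi_{R_{1}}-\chi_{R_{2}}$ and to $\zeta=\chi_{R_{1}}-i\chi_{R_{2}}$, the moduli of which both equal $\chi_{R_{1}\cup R_{2}}$, and using the additivity of $R\mapsto(\tf_{\nu})_{\ker\tf_{\mu}}[\chi_{R}]$ proved there, one finds that the real and the imaginary part of $(\tf_{\nu})_{\ker\tf_{\mu}}(\chi_{R_{1}},\chi_{R_{2}})$ both vanish.

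With these two ingredients the corollary follows formally. Theorem~\ref{shortext} (applied to $\tf_{\nu}$ and the subspace $\ker\tf_{\mu}$) says that $(\tf_{\nu})_{\ker\tf_{\mu}}$ is an extreme point of $\{\ssf\in\mathcal{F}_{+}(\Ee)\mid 0\leq\ssf\leq\tf_{\nu}\}$; since it lies in the smaller convex set $\mathcal{M}$, it is a fortiori an extreme point of $\mathcal{M}$, because an extreme point of a convex set stays extreme in every convex subset containing it. Transporting this along the affine bijection $\theta\mapsto\tf_{\theta}$ of $\mathcal{C}$ onto $\mathcal{M}$, and recalling that $(\tf_{\nu})_{\ker\tf_{\mu}}=\tf_{\nu_{{}_{\ll,\mu}}}$, we conclude that $\nu_{{}_{\ll,\mu}}$ is an extreme point of $\mathcal{C}$, which is exactly the assertion. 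The only point that is not pure bookkeeping is the diagonality claim of the second paragraph --- equivalently, the fact that the shorted form $(\tf_{\nu})_{\ker\tf_{\mu}}$ really is the form induced by a charge --- and this is where I would expect the (modest) difficulty to sit.
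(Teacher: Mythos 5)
Your proof is correct and follows the same route the paper intends: the paper presents this corollary as an immediate consequence of Theorem \ref{shortext}, i.e.\ extremality of the shorted form $(\tf_{\nu})_{\ker\tf_{\mu}}$ in $[0,\tf_{\nu}]$ transported through the charge--form dictionary to the convex subset of forms induced by charges majorized by $\nu$. The one step you rightly single out as non-trivial --- that $\tf_{\nu_{\ll,\mu}}$ actually equals $(\tf_{\nu})_{\ker\tf_{\mu}}$ on all of $\Ee$, proved via the vanishing of $(\tf_{\nu})_{\ker\tf_{\mu}}(\chi_{R_1},\chi_{R_2})$ for disjoint $R_1,R_2$ using $(\tf_{\nu})_{\ker\tf_{\mu}}[\zeta]=(\tf_{\nu})_{\ker\tf_{\mu}}[\abs{\zeta}]$ and additivity --- is exactly the detail the paper leaves implicit, and your argument for it is sound.
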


\begin{remark} If $\alg$ is a $\sigma$-algebra, $\mu$ and $\nu$ are $\sigma$-additive (i.e., $\mu$ and $\nu$ are measures), then the notions of absolute continuity and strong absolute continuity coincide, and hence
\begin{align*}
\mathbf{D}_{\tf_{\mu}}\tf_{\nu}=(\tf_{\nu})_{\ker\tf_{\mu}}.
\end{align*}
In this case, the short-type decomposition coincides with the classical Lebesgue decomposition. Furthermore, we have the following formula for the absolutely continuous part
\begin{align*}
\forall R\in\Rr:\quad\nu_{{}_{\ll,\mu}}(R)=\inf\limits_{\ffi\in\Ee}\int\limits_R |1-\ffi(t)|^2~\mathrm{d}\nu(t).
\end{align*}
If $\alg$ is an algebra of sets, and we consider finitely additive charges on it, then the involved absolute continuity concepts are different. Consequently, there exist $\mu$ and $\nu$ such that
\begin{align*}
\mathbf{D}_{\tf_{\mu}}\tf_{\nu}\neq(\tf_{\nu})_{\ker\tf_{\mu}}.
\end{align*}
\end{remark}
\section{Representable functionals}\label{functionals}

The Lebesgue-type decomposition of positive functionals were studied by several authors, see e.g. \cite{gudder, inoue, kosaki, jot, szucs, tpf-arxiv}. Sz\H{u}cs in \cite{jot} proved that the Lebesgue-type decomposition of representable positive functionals can be derived from their induced sesquilinear forms. In this section we present a short-type decomposition for representable positive functionals, which corresponds to the short type decomposition of their induced forms.

Let $\mathscr{A}$ be a complex ${}^{\ast}$-algebra and let $f:\mathscr{A}\to\mathbb{C}$ be a positive linear functional on it (that is, $f(a^\ast a)\geq0$ for all $a\in \mathscr{A}$). The form induced by $f$ will be denoted by $\tf_f$
\begin{align*}
\tf_f:\mathscr{A}\times \mathscr{A}\to\mathbb{C};\quad\quad(a,b)\mapsto f(b^{\ast}a).
\end{align*}
For positive functionals $f\leq g$ means that $\tf_f\leq\tf_g$. The positive functional $f$ is called \emph{representable}, if there exists a Hilbert space $\mathscr{H}_{{}_f}$, a ${}^{\ast}$-representation $\pi_{{}_f}$ of $\mathscr{A}$ into $\mathscr{H}_{{}_f}$, and a cyclic vector $\xi_{{}_f}\in\mathscr{H}_{{}_f}$ such that
\begin{align*}
\forall a\in \mathscr{A}:\quad f(a)=(\pi_{{}_f}(a)\xi_{{}_f}~|~\xi_{{}_f})_{{}_f}.
\end{align*}
Such a triple $(\mathscr{H}_{{}_f},\pi_{{}_f},\xi_{{}_f})$ is provided by the classical GNS-construction (see \cite{sebestyen} for the details): namely, denote by $N_f$ the set of those elements $a$ such that $f(a^\ast a)=0$, and let $\mathscr{H}_f$ stand for the Hilbert space completion of the inner product space
\begin{align*}
\big(\Aa/_{N_f}, (\cdot~|~\cdot)_f\big) ; \quad\quad\forall a,b\in \Aa:\quad (a+N_f~|~b+N_f)_f:=\tf_f(a,b)=f(b^\ast a).
\end{align*}
For $a\in\Aa$ let $\pi_f(a)$ be the left multiplication by $a$:
\begin{align*}
\forall x\in \Aa:\quad\pi_f(a)(x+N_f):=ax+N_f.
\end{align*}
The cyclic vector $\xi_f$ is defined as the Riesz-representing vector of the continuous linear functional
\begin{align*}
\mathscr{H}_f\supseteq\mathscr{A}/_{N_f}\to\mathbb{C} ; \quad\quad a+N_f\mapsto f(a).
\end{align*}
Note also that \begin{align*}\pi_f(a)\xi_f=a+N_f.\end{align*} We define the absolute continuity and singularity as for forms. Singularity means that the zero functional is the only representable functional which is dominated by both $f$ and $g$. According to \cite[Theorem 2]{sing}, this is equivalent with the singularity of the forms $\tf_f$ and $\tf_g$. We say that $f$ is $g$-absolutely continuous ($f\ll g$), if
\begin{align*}
\forall a\in \mathscr{A}:\quad g(a^\ast a)=0~\Longrightarrow~f(a^\ast a)=0.
\end{align*}
A decomposition of $f$ into representable $g$-absolutely continuous and $g$-singular parts is called short-type decomposition.

Now, the short-type decomposition for representable functionals can be stated as follows.
\begin{theorem}Let $f$ and $g$ be representable positive functionals on the ${}^{\ast}$-algebra $\mathscr{A}$. Then $f$ admits a decomposition
\begin{align*}
f=f_{{}_{\ll,g}}+f_{{}_{\perp,g}}
\end{align*}
to a sum of representable functionals, where $f_{{}_{\ll,g}}$ is $g$-absolutely continuous, $f_{{}_{\perp,g}}$ and $g$ are singular. Furthermore, $f_{{}_{\ll,g}}$ is the greatest among all of the representable functionals $h$ such that $h\leq f$ and $h\ll g$.
\end{theorem}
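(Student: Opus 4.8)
The plan is to transfer the short-type decomposition from the induced forms back to the level of functionals, using the GNS machinery. Given $f$ and $g$ representable, consider their induced forms $\tf_f$ and $\tf_g$ on $\Aa$. Since $\ker\tf_g=N_g$ is a left ideal of $\Aa$, Theorem \ref{kerdec} together with the Remark following it applies: the short $(\tf_f)_{{}_{\ker\tf_g}}$ is a \emph{representable} form, it is $\tf_g$-absolutely continuous, and the complementary form $\tf_f-(\tf_f)_{{}_{\ker\tf_g}}$ is $\tf_g$-singular; moreover $(\tf_f)_{{}_{\ker\tf_g}}$ is the maximum of all forms $\ssf\leq\tf_f$ with $\ssf\ll\tf_g$. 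So the form-level statement is already in hand; the real work is to show that a representable form dominated by $\tf_f$ arises from a representable functional dominated by $f$, and that this correspondence preserves absolute continuity and singularity.

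First I would recall (or cite from \cite{jot}, or reprove briefly) the key correspondence: if $\tf$ is a representable form with $0\le\tf\le\tf_f$, then there is a unique representable positive functional $h$ with $\tf_h=\tf$ and $h\le f$, and conversely. The construction is the standard one: the domination $\tf\le\tf_f$ gives, via the GNS space $\Hh_f$, a positive contraction $C\in\mathscr{B}(\Hh_f)$ with $\tf(a,b)=(C\pi_f(a)\xi_f\,|\,\pi_f(b)\xi_f)_f$; one checks $C$ commutes with the representation $\pi_f$ (this uses that $\tf$ is representable, i.e. $\tf[ab]\le\lambda_a\tf[b]$), and then $h(a):=(C\pi_f(a)\xi_f\,|\,\xi_f)_f$ is a representable positive functional with induced form $\tf$. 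Applying this with $\tf=(\tf_f)_{{}_{\ker\tf_g}}$ produces $f_{{}_{\ll,g}}$, and then $f_{{}_{\perp,g}}:=f-f_{{}_{\ll,g}}$ is automatically representable (difference of a representable functional and a representable functional dominated by it, via the contraction $I-C$), with induced form $\tf_f-(\tf_f)_{{}_{\ker\tf_g}}$.

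With the correspondence established, the three remaining assertions follow by translation. Absolute continuity: $f_{{}_{\ll,g}}\ll g$ means $g(a^*a)=0\Rightarrow f_{{}_{\ll,g}}(a^*a)=0$, i.e. $\tf_g[a]=0\Rightarrow (\tf_f)_{{}_{\ker\tf_g}}[a]=0$, which is exactly $(\tf_f)_{{}_{\ker\tf_g}}\ll\tf_g$ from the lemma. Singularity: $f_{{}_{\perp,g}}\perp g$ as functionals is, by \cite[Theorem 2]{sing}, equivalent to $\tf_{f_{{}_{\perp,g}}}\perp\tf_g$, i.e. $\tf_f-(\tf_f)_{{}_{\ker\tf_g}}\perp\tf_g$, which is the singular part in Theorem \ref{kerdec}. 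Maximality: if $h$ is representable, $h\le f$, $h\ll g$, then $\tf_h\le\tf_f$ and $\tf_h\ll\tf_g$, so by the maximality clause of Theorem \ref{kerdec} we get $\tf_h\le(\tf_f)_{{}_{\ker\tf_g}}=\tf_{f_{{}_{\ll,g}}}$, i.e. $h\le f_{{}_{\ll,g}}$.

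The main obstacle is the functional$\leftrightarrow$form correspondence for the \emph{dominated} pieces — specifically, verifying that the representing contraction $C$ on $\Hh_f$ commutes with $\pi_f$ so that $h(a)=(C\pi_f(a)\xi_f\,|\,\xi_f)_f$ is genuinely representable (with GNS triple built from $C$), and that $f-f_{{}_{\ll,g}}$ is again representable. Both are essentially known (this is the content of Sz\H{u}cs's approach in \cite{jot}), so in the write-up I would either invoke that reference directly or include the short operator-theoretic argument: the commutation follows because for $\tf=(\tf_f)_{{}_{\ker\tf_g}}$, representability ($\tf[ab]\le\lambda_a\tf[b]$) forces $C$ to intertwine the left multiplications, and then $I-C$ is also a positive contraction commuting with $\pi_f$, handling $f_{{}_{\perp,g}}$. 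Everything else is a routine unwinding of definitions.
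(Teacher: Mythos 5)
Your strategy is in essence the paper's own: both arguments come down to the orthogonal projection $P$ of the GNS space $\mathscr{H}_f$ onto $\mathscr{M}=\overline{\{a+N_f\mid g(a^*a)=0\}}$ commuting with $\pi_f$, and your contraction $C$ is exactly $I-P$. The difference is that the paper proves this commutation directly, by showing that $\mathscr{M}$ is $\pi_f$-invariant (using that $N_g$ is a left ideal and the estimate $g(a^*x^*xa)\le\|\pi_g(x)\|_g^2\,g(a^*a)$), whereas you claim it follows from the representability condition $\tf[ab]\le\lambda_a\tf[b]$ of the shorted form. That claim is where your argument has a genuine gap: the growth condition alone does \emph{not} force the representing operator to intertwine the left multiplications. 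What is actually needed is the invariance identity $\tf(ab,c)=\tf(b,a^*c)$; without it, $h(a):=(C\pi_f(a)\xi_f\mid\xi_f)_f$ need not satisfy $\tf_h=\tf$, let alone be representable. For a concrete counterexample take $\Aa=\mathbb{C}^2$ with pointwise operations and conjugation, $f(a)=a_1+a_2$, so that $\mathscr{H}_f=\mathbb{C}^2$ and $\pi_f(a)=\mathrm{diag}(a_1,a_2)$; the positive contraction $C=\frac12\bigl(\begin{smallmatrix}1&1/2\\1/2&1\end{smallmatrix}\bigr)$ defines a form $\tf(a,b)=(Ca\mid b)$ with $0\le\tf\le\tf_f$ and $\tf[ab]\le 3\|a\|_\infty^2\,\tf[b]$, yet $C$ does not commute with $\pi_f$ and $\tf(ab,c)\neq\tf(b,a^*c)$ in general.

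The step is nevertheless salvageable, because for the particular form $(\tf_f)_{\ker\tf_g}$ the required invariance \emph{does} hold: $\tf_f$ is invariant (being induced by a positive functional) and $N_g$ is a left ideal, so $\mathscr{M}$ is $\pi_f$-invariant and hence reducing, which is precisely the verification the paper carries out. Once you insert that argument (or an exact citation of the corresponding correspondence in \cite{jot}) in place of your incorrect justification, the remainder of your proposal --- the transfer of absolute continuity via kernels, singularity via \cite[Theorem 2]{sing}, representability of $f-f_{{}_{\ll,g}}$ via $I-C=P$, and maximality via Theorem \ref{kerdec} --- is a correct and routine unwinding, identical in substance to the paper's proof.
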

\begin{proof}
Let $\mathscr{M}$ be the following closed subspace of $\mathscr{H}_f$
\begin{align*}
\mathscr{M}:=\overline{\{a+N_f~|~g(a^\ast a)=0\}}
\end{align*}
and let $P$ be the orthogonal projection from $\mathscr{H}_f$ onto $\mathscr{M}$. Then $\mathscr{M}$ and $\mathscr{M}^{\perp}$ are $\pi_f$-invariant subspaces. Since $\pi_f$ is a ${}^{\ast}$-representation, it is enough to prove that $\mathscr{M}$ is $\pi_f$ invariant. Let $a,x\in \mathscr{A}$ and assume that $g(a^\ast a)=0$. Then
\begin{align*}
\pi_f(x)(a+N_f)=xa+N_f\in\mathscr{M}
\end{align*}
because
\begin{align*}
g(a^{\ast} x^{\ast}xa)=\| \pi_g(x)(a+N_f)\|_g^2\leq \| \pi_g(x)\|_g^2\cdot g(a^{\ast} a)=0.
\end{align*}
Consequently,
\begin{align*}
\pi_f(x)\langle\mathscr{M}\rangle\subseteq\overline{\pi_f(x)\langle\{a+N_f~|~g(a^\ast a)=0\}\rangle}\subseteq\mathscr{M},
\end{align*}
as it is stated. Now, let us define the functionals
\begin{align*}
f_{{}_{\ll},g}(a):=(\pi_f(a)(I-P)\xi_f~|~(I-P)\xi_f)_f.
\end{align*}
\begin{align*}
f_{{}_{\perp,g}}(a):=(\pi_f(a)P\xi_f~|~P\xi_f)_f.
\end{align*}
Clearly, $f_{{}_{\ll,g}}$ and $f_{{}_{\perp,g}}$ are representable positive functionals. On the other hand, since $\mathscr{M}$ is $\pi_f$-invariant, we find that
\begin{align*}
f_{{}_{\ll,g}}(a^{\ast}a)=\|\pi_f(a)(I-P)\xi_f\|_f^2=\|(I-P)\pi_f(a)\xi_f\|_f^2=\|(I-P)(a+N_f)\|_f^2=\tf_{f_{{}_{\ll,g}}}[a]
\end{align*}
and similarly,
\begin{align*}
f_{{}_{\perp,g}}(a^{\ast}a)=\|P(a+N_f)\|_f^2=\tf_{f_{{}_{\perp,g}}}[a].
\end{align*}
Since $\tf_{f_{{}_{\ll,g}}}$ is $\tf_g$-absolutely continuous, and $\tf_{f_{{}_{\perp,g}}}$ is $\tf_g$-singular, we infer that $f_{{}_{\ll,g}}\ll g$ and $f_{{}_{\perp,g}}\perp g$. The maximality of $f_{{}_{\ll,g}}$ follows from the maximality of $\tf_{f_{{}_{\ll,g}}}$.
\end{proof}

\begin{corollary}
Let $f$ and $g$ be representable positive functionals on the ${}^{\ast}$-algebra $\mathscr{A}$. Then $f_{{}_{\ll,g}}$ is an extremal point of the convex set of those representable functionals that are majorized by $f$.
\end{corollary}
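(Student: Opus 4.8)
The plan is to transfer the claim to the level of forms via the (injective, affine, order‑preserving) correspondence $h\mapsto\tf_h$ between representable positive functionals and their induced forms, and then to invoke Theorem~\ref{shortext}. The first step is to identify $\tf_{f_{{}_{\ll,g}}}$ with the short of $\tf_f$ to the subspace $\ker\tf_g$. Keeping the notation of the proof of the theorem above, we have $\ker\tf_f=N_f$ and $\mathscr{M}=\overline{\{x+N_f\mid x\in\ker\tf_g\}}$, hence
\begin{align*}
\tf_{f_{{}_{\ll,g}}}[a]&=f_{{}_{\ll,g}}(a^{\ast}a)=\|(I-P)(a+N_f)\|_f^2=\dist^2(a+N_f,\mathscr{M})\\
&=\inf_{x\in\ker\tf_g}\tf_f[a-x]=(\tf_f)_{\ker\tf_g}[a]
\end{align*}
for every $a\in\mathscr{A}$; since a form is determined by its quadratic form, $\tf_{f_{{}_{\ll,g}}}=(\tf_f)_{\ker\tf_g}$. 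By Theorem~\ref{shortext} this form is an extremal point of the convex set $\{\ssf\mid 0\leq\ssf\leq\tf_f\}$ of forms on $\mathscr{A}$.

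The one ingredient that needs genuine work is the injectivity of $h\mapsto\tf_h$ on representable functionals. A representable functional $h$ vanishes on $N_h=\ker\tf_h$ (if $a\in N_h$ then $\pi_h(a)\xi_h=a+N_h=0$, so $h(a)=(\pi_h(a)\xi_h\mid\xi_h)_h=0$), so $a+N_h\mapsto h(a)$ is a well‑defined bounded functional on $\mathscr{A}/N_h$ with Riesz representative $\xi_h$. Using $\pi_h(a)\xi_h=a+N_h$ and the cyclicity of $\xi_h$ one checks that $\{b^{\ast}a+N_h\mid a,b\in\mathscr{A}\}=\pi_h(\mathscr{A}^2)\xi_h$ is total in $\mathscr{H}_h$: applying the bounded operators $\pi_h(b)$ to the dense subspace $\mathscr{A}/N_h$ produces a set whose closure still contains $\overline{\pi_h(\mathscr{A})\xi_h}=\mathscr{H}_h$. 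On this total set $h$ takes the values $(b^{\ast}a+N_h\mid\xi_h)_h=h(b^{\ast}a)=\tf_h(a,b)$, which are determined by $\tf_h$; by boundedness this pins down $\xi_h$, hence $h$. (Additivity and positive homogeneity of $h\mapsto\tf_h$ are clear, so the map is affine and order preserving.)

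With these in hand the conclusion is immediate. Since $f_{{}_{\ll,g}}$ is representable and $\tf_{f_{{}_{\ll,g}}}\leq\tf_f$, it belongs to the convex set of representable functionals majorized by $f$. If $f_{{}_{\ll,g}}=t\,h_1+(1-t)\,h_2$ with $t\in(0,1)$ and $h_1,h_2$ representable functionals satisfying $h_i\leq f$, then $\tf_{f_{{}_{\ll,g}}}=t\,\tf_{h_1}+(1-t)\,\tf_{h_2}$ with $0\leq\tf_{h_i}\leq\tf_f$; extremality of $(\tf_f)_{\ker\tf_g}$ forces $\tf_{h_1}=\tf_{h_2}=\tf_{f_{{}_{\ll,g}}}$, and injectivity then forces $h_1=h_2=f_{{}_{\ll,g}}$. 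The hard part is the injectivity step of the second paragraph: a priori $\tf_h$ only records the restriction of $h$ to $\mathscr{A}^2=\Span\{b^{\ast}a\}$, and one must exploit the cyclicity of the GNS vector together with the identity $\pi_h(a)\xi_h=a+N_h$ to see that $\pi_h(\mathscr{A}^2)\xi_h$ is already total in $\mathscr{H}_h$, so that $\tf_h$ does determine $h$. The remaining steps are routine.
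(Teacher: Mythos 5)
Your proposal is correct and follows the route the paper intends (the corollary is left as an immediate consequence of Theorem~\ref{shortext}): you identify $\tf_{f_{{}_{\ll,g}}}$ with the short $(\tf_f)_{\ker\tf_g}$ via the GNS projection and pull extremality back through the correspondence $h\mapsto\tf_h$. The only content the paper leaves implicit --- that a representable functional is determined by its induced form, via the totality of $\pi_h(\mathscr{A}^2)\xi_h$ and the Riesz representative $\xi_h$ --- you supply correctly, so nothing is missing.
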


\end{document}